\documentclass[reqno]{amsart}

\usepackage{amssymb,amsmath,amsthm,xcolor,pdfsync,enumerate,hyperref,cleveref,fixltx2e}


\righthyphenmin=2

\tolerance=400

\allowdisplaybreaks

\newtheorem{thrm}{Theorem}
\newtheorem{cor}{Corollary}
\newtheorem{lem}{Lemma}
\newtheorem{prop}{Proposition}

\theoremstyle{definition}

\newtheorem{rem}{Remark}

\crefrangeformat{equation}{#3(#1)#4--#5(#2)#6}

\crefname{thrm}{Theorem}{Theorems}
\crefname{lem}{Lemma}{Lemmas}
\crefname{cor}{Corollary}{Corollaries}
\crefname{prop}{Proposition}{Propositions}
\crefname{defn}{Definition}{Definitions}
\crefname{exm}{Example}{Examples}
\crefname{rem}{Remark}{Remarks}
\crefname{section}{Section}{Sections}
\crefname{equation}{}{}

\renewcommand{\iff}{\Leftrightarrow}

\newcommand{\impl}%
{%
 \Rightarrow
}

\newcommand{\id}{\mathrm{id}}

\newcommand{\dom}[1]%
{%
 \operatorname{\mathrm{dom}}{#1}%
}

\newcommand{\ran}[1]%
{%
 \operatorname{\mathrm{ran}}{#1}%
}

\newcommand{\cC}%
{%
 \mathcal C%
}

\newcommand{\ob}[1]%
{%
 \operatorname{\mathrm{Ob}}{#1}%
}

\newcommand{\mor}[2]%
{%
 \operatorname{\mathrm{Mor}}(#1,#2)%
}

\newcommand{\ch}[1]%
{%
 \operatorname{\mathrm{char}}{#1}%
}


\begin{document}

\title{Jordan derivations of finitary incidence rings}
\author{Mykola Khrypchenko}
\address{Universidade Federal de Santa Catarina, Florian\'opolis, SC, Brazil, CEP: 88040--900}
\email{nskhripchenko@gmail.com}
\subjclass[2010]{Primary 16W25; Secondary 16S60, 16S50.}
\keywords{Finitary incidence ring, Jordan derivation, derivation}

\begin{abstract}
 Let $P$ be a preordered set, $R$ a ring and $FI(P,R)$ the finitary incidence ring of $P$ over $R$~\cite{Khripchenko10-quasi}. We find a criterion for all Jordan derivations of $FI(P,R)$ to be derivations and generalize Theorem 3.3 from~\cite{XiaoJDerIAlg}. In particular, we prove that each Jordan derivation of the ring $RFM_I(R)$ of row-finite $I\times I$-matrices over $R$ is a derivation, if $|I|>1$.
\end{abstract}

\maketitle

\section*{Introduction}
Given a ring $R$ and $n\ge 2$, it follows from the classical results by Jacobson and Rickart (\cite[Theorems 8 and 22]{Jacobson-Rickart50}) that every Jordan derivation of the ring $M_n(R)$ of all $n\times n$-matrices is a derivation. Benkovi\v c proved that the same is true for the algebra $T_n(R)$ of upper triangular $n\times n$-matrices over a commutative unital ring $R$ of characteristic different from $2$ (\cite[Corollary 1.2]{Benkovic05}). In~\cite{Ghosseiri07} the author described Jordan derivations of a subring of $M_n(R)$ which contains $T_n(R)$, under the same assumption on $R$ (see~\cite[Theorem~1]{Ghosseiri07}). In particular, the Benkovi\v c's result follows from this description (\cite[Corollary~2]{Ghosseiri07}).

The algebra $T_n(R)$, $n\ge 2$, is a particular case of the triangular algebra $Tri(A,M,B)$, where $A,B$ are unital $R$-algebras and $M$ is an $(A,B)$-bimodule. Jordan derivations of such algebras over a ring of characteristics different from $2$ were studied in~\cite{Zhang-Yu06}. Theorem~2.1 from~\cite{Zhang-Yu06} says that each Jordan derivation of $Tri(A,M,B)$ is a derivation, if $M$ is faithful as a left $A$-module and as a right $B$-module.

Benkovi\v c and \v Sirovnik generalized~\cite[Theorem~1]{Ghosseiri07} to unital algebras admitting nontrivial idempotents with certain conditions (\cite[Theorem~4.1]{Benkovic-Sirovnik12}). This also permitted to obtain~\cite[Theorem~2.1]{Zhang-Yu06} as a consequence (see~\cite[Corollary~4.3]{Benkovic-Sirovnik12}).

Another example of a triangular algebra is the incidence algebra $I(P,R)$ of a finite poset $P$, $|P|\ge 2$, over a commutative ring $R$. Indeed, fixing an enumeration $P=\{x_1,\dots,x_n\}$ with $x_i\le x_j\impl i\le j$ and choosing $1\le m\le n$, one can show that $I(P,R)$ is isomorphic to $Tri(I(Q,R),M,I(P\setminus Q,R))$, where $Q=\{x_1,\dots,x_m\}$ and $M$ consists of the functions $f\in I(P,R)$, such that $f(x,y)=0$, whenever $x,y\in Q$ or $x,y\in P\setminus Q$. However,~\cite[Theorem~2.1]{Zhang-Yu06} cannot be directly applied to $I(P,R)$, because $M$ is not always faithful as a left $I(Q,R)$-module or as a right $I(P\setminus Q,R)$-module. For instance, when $P$ is an anti-chain, the bimodule $M$ is trivial for any choice of $Q$. Nevertheless, using direct computations, Xiao proved in~\cite{XiaoJDerIAlg} that every Jordan derivation of $I(P,R)$ is a derivation, provided that $P$ is a locally finite preordered set and $\ch R\ne 2$ (\cite[Theorem~3.3]{XiaoJDerIAlg}).

Finitary incidence algebras appeared in~\cite{Khripchenko-Novikov09} as a generalization of incidence algebras to the non-locally finite case. Initially they were defined, when $P$ is a poset and $R$ is a field. Then the definition was extended in~\cite{Khripchenko10-quasi} to the so-called partially ordered categories~\cite{Khripchenko10-quasi} and, in particular, to preordered sets and rings, giving thus the finitary incidence ring $FI(P,R)$ of $P$ over $R$.

The aim of this article is to generalize~\cite[Theorem~3.3]{XiaoJDerIAlg} to arbitrary $P$ and $R$, considering $FI(P,R)$ instead of $I(P,R)$. To avoid the restriction on the characteristics of $R$, we use the Jacobson-Rickart's definition of a Jordan derivation~\cite[(26)--(27)]{Jacobson-Rickart50}.

In Section~\ref{sec-tech} we prove some results on Jordan derivations which take place in an arbitrary ring (see \cref{lem-ed(r)f,lem-ed(e)f+ed(f)f,lem-d_e,lem-ed(rs)f}).

In Section~\ref{sec-tech-fin} we restrict ourselves to rings $R$ admitting a set of pairwise orthogonal idempotents which impose certain conditions on the multiplication in $R$. One should note that this class of rings contains the incidence ring $FI(P,R)$ as well as the ring $RFM_I(R)$ of row-finite matrices over $R$ whose entries are indexed by the pairs of elements of $I$. The main result of Section~\ref{sec-tech-fin} is Lemma~\ref{lem-d'-der} which permits us to construct a derivation $d'$ of $R$ from a Jordan derivation $d$ of $R$ under some assumptions on $d$.

In Section~\ref{sec-FI(C)} we apply the lemmas from \cref{sec-tech,sec-tech-fin,} to the finitary incidence ring $FI(\cC)$ of a partially ordered category $\cC$. Proposition~\ref{prop-d-der-iff-d_x-der} says that a Jordan derivation $d$ of $FI(\cC)$ is a derivation if and only if the ``restriction'' $d_x$ of $d$ to $\mor xx$ is a derivation for each $x\in\ob\cC$. Using this proposition and~\cite[Theorem~2.1]{Zhang-Yu06} under certain restrictions on $\cC$, we reduce the question whether all Jordan derivations of $FI(\cC)$ are derivations to the same question in the rings $\mor xx$, where $x$ runs through the set of isolated objects of $\cC$ (see Theorem~\ref{thrm-crit-jder-FI(C)-is-der}).

In Section~\ref{sec-RFM} we study Jordan derivations of the ring $RFM_I(R)$. We prove, with the help of Lemma~\ref{lem-d'-der} and the result by Jacobson and Rickart on Jordan derivations of $M_n(R)$, that each Jordan derivation of $RFM_I(R)$ is a derivation, when $|I|>1$ (see Theorem~\ref{thrm-jder-of-RFM_I-for-|I|>1}). Thus, we extend the Jacobson-Rickart's result to a ring of infinite matrices.

Section~\ref{sec-FI(P,R)} is an easy application of \cref{thrm-crit-jder-FI(C)-is-der,thrm-jder-of-RFM_I-for-|I|>1}. We show that each Jordan derivation of $FI(P,R)$ is a derivation, when $P$ has no isolated elements. Otherwise the question whether each Jordan derivation of $FI(P,R)$ is a derivation reduces to the same question for the ring $R$ (see Theorem~\ref{thrm-crit-jder-FI(P,R)-is-der}). As a remark we obtain a generalization of~\cite[Theorem~3.3]{XiaoJDerIAlg} (see~\cref{rem-R-linear}).

\section{Preliminaries}\label{sec-prelim}
Let $R$ be a ring and $d$ an additive map $d:R\to R$. 	Following~\cite{Jacobson-Rickart50}, we shall call $d$ a {\it Jordan derivation} of $R$, if it satisfies
\begin{align}
 d(r^2)&=d(r)r+rd(r),\label{eq-d(r^2)}\\
 d(rsr)&=d(r)sr+rd(s)r+rsd(r)\label{eq-d(rsr)}
\end{align}
for arbitrary $r,s\in R$. Obviously, a usual derivation of $R$, that is an additive map $d:R\to R$ with
\begin{align*}
 d(rs)=d(r)s+rd(s)
\end{align*}
for all $r,s\in R$, is a Jordan derivation of $R$.

Let $d$ be a Jordan derivation of $R$. Applying~\eqref{eq-d(r^2)} to the sum $r+s$, we immediately get
\begin{align}\label{eq-d(rs+sr)}
 d(rs+sr)=d(r)s+rd(s)+d(s)r+sd(r)
\end{align}
for all $r,s\in R$. Equality~\eqref{eq-d(rs+sr)} is often used as a definition of a Jordan derivation of $R$ (see, for example,~\cite{Herstein57}). It is in fact equivalent to~\eqref{eq-d(r^2)}, when $\ch R\ne 2$. Indeed, to get~\eqref{eq-d(r^2)}, one simply substitutes $s=r$ into~\eqref{eq-d(rs+sr)} and ``divides'' both sides by $2$. Condition~\eqref{eq-d(rsr)} also follows from~\eqref{eq-d(rs+sr)} in this case: to prove it, one sets $s=rs'+s'r$ in~\eqref{eq-d(rs+sr)}, where $s'$ is an arbitrary element of $R$ (see~\cite[Lemma 3.1]{Herstein57}).

We shall also use the following formula by Herstein (\cite[Lemma 3.2]{Herstein57}):
\begin{align}\label{eq-d(rst+tsr)}
 d(rst+tsr)=d(r)st+rd(s)t+rsd(t)+d(t)sr+td(s)r+tsd(r),
\end{align}
which is a consequence of~\eqref{eq-d(rsr)} with $r$ replaced by $r+t$.

Let $\cC$ be a preadditive small category. We shall call $\cC$ a {\it pocategory}, when $\ob\cC$ admits a partial order $\le$. Denote by $I(\cC)$ the set of the formal sums
\begin{align}\label{eq-formal-sum}
 \alpha=\sum_{x\le y}\alpha_{xy}[x,y],
\end{align}
where $x,y\in\ob\cC$, $[x,y]=\{z\in\ob\cC\mid x\le z\le y\}$ and $\alpha_{xy}\in\mor xy$. Clearly, $I(\cC)$ is an abelian group under the addition, which naturally comes from the addition of morphisms in $\cC$. Without loss of generality we shall also consider the series $\alpha$ of the form~\eqref{eq-formal-sum}, whose indices run through a subset $X$ of the segments of $\ob\cC$, meaning that $\alpha_{xy}$ is the zero $0_{xy}$ of $\mor xy$ for $[x,y]\not\in X$.

The sum~\eqref{eq-formal-sum} is called a {\it finitary series}, whenever for any pair of $x,y\in\ob\cC$ with $x<y$ there exists only a finite number of $u,v\in\ob\cC$, such that $x\le u<v\le y$ and $\alpha_{uv}\ne 0_{uv}$. The set of finitary series will be denoted by $FI(\cC)$. Note that $FI(\cC)$ is an additive subgroup of $I(\cC)$. Moreover, $FI(\cC)$ is closed under the convolution of the series:
\begin{align}\label{eq-convolution}
 \alpha\beta=\sum_{x\le y}\left(\sum_{x\le z\le y}\alpha_{xz}\beta_{zy}\right)[x,y]
\end{align}
for $\alpha,\beta\in FI(\cC)$. Thus, $FI(\cC)$ is a ring, called the {\it finitary incidence ring of $\cC$} \footnote{In~\cite{Khripchenko10-quasi} by a pocategory one means a preadditive small category $\cC$ with a partial order $\le$ on $\ob\cC$ satisfying $x\le y\iff\mor xy\ne 0_{xy}$. It turns out that the last property is superfluous for the definition of $FI(\cC)$.}


Given $x\in\ob\cC$, denote by $e_x$ the element $\id_x[x,x]\in FI(\cC)$, where $\id_x$ is the identity morphism from $\mor xx$. Observe that
\begin{align}\label{eq-e_x-alpha-e_y}
 e_x\alpha e_y=
 \begin{cases}
  \alpha_{xy}[x,y], & x\le y,\\
  0, & x\not\le y.
 \end{cases}
\end{align}

Let $(P,\preceq)$ be a preordered set and $R$ a ring. We slightly change the construction of the pocategory $\cC(P,R)$, introduced in~\cite{Khripchenko10-quasi}.

Denote by $\sim$ the natural equivalence relation on $P$, namely $x\sim y\iff x\preceq y\preceq x$, and by $\overline P$ the quotient set $P/{\sim}$. Define $\ob{\cC(P,R)}$ to be $\overline P$ with the induced partial order $\le$. For any pair $\bar x,\bar y\in\ob\cC(P,R)$:
$$
 \mor{\bar x}{\bar y}=RFM_{\bar x\times\bar y}(R),
$$
where $RFM_{I\times J}(R)$ denotes the additive group of row-finite matrices over $R$, whose rows are indexed by the elements of $I$ and columns by the elements of $J$ (for the ring $RFM_{I\times I}(R)$ we shall use the shorter notation $RFM_I(R)$). The composition of morphisms in $\cC(P,R)$ is the matrix multiplication, which is defined by the row-finiteness condition.

The {\it finitary incidence ring~\cite{Khripchenko10-quasi} of $P$ over $R$}, denoted by $FI(P,R)$, is by definition $FI(\cC(P,R))$.\footnote{We allow $\mor{\bar x}{\bar y}$ to be nonzero, when $x\not\le y$, in contrast with~\cite{Khripchenko10-quasi}. This, however, does not change the resulting ring $FI(P,R)$.}

\section{Some technical lemmas}\label{sec-tech}

Let $R$ be an arbitrary (associative) ring. An {\it idempotent} of $R$ is an element $e\in R$ with $e^2=e$. The set of idempotents of $R$ is denoted by $E(R)$. Two (distinct) idempotents are called {\it orthogonal} if they commute and their product is zero.

\begin{lem}\label{lem-ed(r)f}
 Let $d$ be a Jordan derivation of $R$ and $r\in R$. For any pair of orthogonal $e,f\in E(R)$:
 \begin{align}\label{eq-ed(r)f}
  ed(r)f=ed(erf)f-ed(e)rf-erd(f)f+ed(fre)f.
 \end{align}
 Moreover, for any $e\in E(R)$:
 \begin{align}\label{eq-ed(r)e}
  ed(r)e=ed(ere)e-ed(e)re-erd(e)e.
 \end{align}
\end{lem}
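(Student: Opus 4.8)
The plan is to view both identities as formulas that recover the ``off-diagonal'' piece $ed(r)f$ (respectively the ``diagonal'' piece $ed(r)e$) of $d(r)$ from the values of $d$ on suitably sandwiched products, and to obtain them by expanding those values via the Jordan identities from \cref{sec-prelim} and then multiplying through by the idempotents. I would treat the second, simpler identity \eqref{eq-ed(r)e} first, as a warm-up, since it follows from a single application of \eqref{eq-d(rsr)}.

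For \eqref{eq-ed(r)e}, I would substitute $r\mapsto e$ and $s\mapsto r$ into \eqref{eq-d(rsr)}, obtaining $d(ere)=d(e)re+ed(r)e+erd(e)$. Multiplying this equality on the left and on the right by $e$ and using $e^2=e$ throughout, the three terms on the right become $ed(e)re$, $ed(r)e$ and $erd(e)e$, while the left-hand side becomes $ed(ere)e$. Solving for $ed(r)e$ yields exactly \eqref{eq-ed(r)e}.

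For \eqref{eq-ed(r)f}, I would use Herstein's formula \eqref{eq-d(rst+tsr)} with the substitution $(r,s,t)\mapsto(e,r,f)$, which expands $d(erf+fre)$ as the sum of the six terms $d(e)rf$, $ed(r)f$, $erd(f)$, $d(f)re$, $fd(r)e$ and $frd(e)$. The key step is then to multiply this identity on the left by $e$ and on the right by $f$: since $e$ and $f$ are orthogonal, $ef=fe=0$, so the three terms $d(f)re$, $fd(r)e$ and $frd(e)$ are annihilated (each acquires a factor $ef$ after the sandwiching), whereas, using $e^2=e$ and $f^2=f$, the remaining three become $ed(e)rf$, $ed(r)f$ and $erd(f)f$. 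On the left one obtains $ed(erf)f+ed(fre)f$. Rearranging the resulting relation to isolate $ed(r)f$ gives \eqref{eq-ed(r)f}.

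There is no serious obstacle here: once the right substitutions are chosen, the whole argument is bookkeeping. The only points requiring care are selecting $(e,r,f)$ in \eqref{eq-d(rst+tsr)} so that the sandwich $e(\cdot)f$ retains precisely the three desired terms, and observing that it is orthogonality, rather than mere idempotency, that forces the other three to vanish; this is also why the diagonal case $e=f$ cannot be read off from \eqref{eq-ed(r)f} and must be handled separately through \eqref{eq-d(rsr)}.
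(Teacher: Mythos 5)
Your proof is correct and follows essentially the same route as the paper's: identity \eqref{eq-ed(r)f} is obtained by applying Herstein's formula \eqref{eq-d(rst+tsr)} to $(e,r,f)$ and sandwiching between $e$ and $f$ using orthogonality, while \eqref{eq-ed(r)e} comes from \eqref{eq-d(rsr)} applied to $d(ere)$ and multiplying through by $e$. The only difference is cosmetic: you treat the diagonal case first and spell out the bookkeeping that the paper leaves implicit.
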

\begin{proof}
 By~\eqref{eq-d(rst+tsr)}
 \begin{align*}
  d(erf+fre)&=d(e)rf+ed(r)f+erd(f)+d(f)re+fd(r)e+frd(e).
 \end{align*}
 Multiplying this by $e$ on the left and by $f$ on the right and using the assumption that $ef=fe=0$, we come to~\eqref{eq-ed(r)f}.

 For~\eqref{eq-ed(r)e} we use~\eqref{eq-d(rsr)}:
 $$
  d(ere)=d(e)re+ed(r)e+erd(e).
 $$
\end{proof}

\begin{cor}\label{cor-ed(frf)e}
 Let $d$ be a Jordan derivation of $R$, $e,f\in E(R)$ be orthogonal and $r\in fRf$. Then $ed(r)e=0$.
\end{cor}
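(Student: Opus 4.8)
The plan is to reduce the claim to a direct application of formula~\eqref{eq-ed(r)e} from~\cref{lem-ed(r)f}, exploiting the fact that $r\in fRf$ together with the orthogonality $ef=fe=0$ forces the relevant products with $e$ to vanish. Since $r\in fRf$, we may write $r=frf$, so that $r$ absorbs $f$ on both sides; the key observation is then that sandwiching such an $r$ between copies of $e$, or multiplying it by $e$ on either side, annihilates it.

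Concretely, I would first record the three elementary consequences of $r=frf$ and $ef=fe=0$: namely $ere=e(frf)e=(ef)r(fe)=0$, then $er=e(frf)=(ef)rf=0$, and finally $re=(frf)e=fr(fe)=0$. These are immediate once one inserts the idempotent relations, and they are the only facts needed.

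I would then invoke~\eqref{eq-ed(r)e}, which reads
\begin{align*}
 ed(r)e=ed(ere)e-ed(e)re-erd(e)e.
\end{align*}
The first term vanishes because $ere=0$ gives $d(ere)=d(0)=0$; the second term vanishes because $re=0$; and the third term vanishes because $er=0$. Hence the right-hand side is zero, yielding $ed(r)e=0$ as claimed.

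I do not anticipate any real obstacle here: the statement is a clean corollary, and the only point requiring minor care is to make sure one uses $r=frf$ in full (rather than just $r=fr$ or $r=rf$) so that both the left and right occurrences of $e$ are killed in the term $ere$. Everything else is a routine substitution into~\eqref{eq-ed(r)e}.
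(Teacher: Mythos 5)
Your proof is correct and follows exactly the paper's own argument: the paper likewise observes that $r\in fRf$ together with $ef=fe=0$ gives $er=re=0$, so every term on the right-hand side of~\eqref{eq-ed(r)e} vanishes. The only (harmless) extra detail you supply is spelling out $ere=0$ and $d(0)=0$ for the first term, which the paper subsumes in the same observation.
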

\noindent For $er=re=0$ in this case, so the right-hand side of~\eqref{eq-ed(r)e} is zero.

\begin{rem}\label{rem-ed(fre)f=0}
 Under the conditions of Lemma~\ref{lem-ed(r)f} if $d$ is a derivation of $R$, then $ed(fre)f=0$.
\end{rem}
\noindent Indeed, in this case $d(erf)=d(e)rf+ed(r)f+erd(f)$, and thus $ed(r)f=ed(erf)f-ed(e)rf-erd(f)f$. Comparing this with~\eqref{eq-ed(r)f}, we obtain the desired equality.

\begin{lem}\label{lem-ed(e)f+ed(f)f}
 Given a Jordan derivation $d$ of $R$, for all $e,f\in E(R)$, which are either orthogonal or equal, one has
\begin{align*}
 ed(e)f+ed(f)f=0.
\end{align*}
\end{lem}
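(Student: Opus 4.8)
The plan is to treat the two allowed cases, $e=f$ and $e,f$ orthogonal, separately, in each case reducing to a single substitution into identities already established.

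For the equal case I would start from \eqref{eq-d(r^2)} with $r=e$. Since $e^2=e$, this reads $d(e)=d(e)e+ed(e)$. Multiplying on the left by $e$ and using $e^2=e$ gives $ed(e)=ed(e)e+ed(e)$, so that $ed(e)e=0$. The claim with $f=e$ is then $ed(e)e+ed(e)e=0$, which is immediate; in fact each summand already vanishes.

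For the orthogonal case I would substitute $r=f$ into the formula \eqref{eq-ed(r)f} of \cref{lem-ed(r)f}. Using $ef=fe=0$ and $f^2=f$, one checks that $erf=eff$ contains the factor $ef$ and is zero, $fre=ffe$ contains $fe$ and is zero, and $erd(f)f=efd(f)f$ again carries the factor $ef$ and is zero; meanwhile $ed(e)rf$ collapses to $ed(e)f$. What remains of \eqref{eq-ed(r)f} is therefore $ed(f)f=-ed(e)f$, which is precisely the asserted identity $ed(e)f+ed(f)f=0$.

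There is no substantial obstacle here, since both cases amount to one substitution into \eqref{eq-d(r^2)} or \eqref{eq-ed(r)f} respectively. The only point that requires care is the bookkeeping in the orthogonal case: verifying that exactly three of the four terms on the right of \eqref{eq-ed(r)f} vanish because each of them contains a factor $ef$ or $fe$, while the surviving term simplifies correctly using $f^2=f$.
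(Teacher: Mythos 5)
Your proof is correct. In the orthogonal case you do essentially what the paper does: a single substitution of an idempotent into \eqref{eq-ed(r)f} from \cref{lem-ed(r)f}. The paper sets $r=e$ and reads off $ed(e)f=-ed(f)f$; you set $r=f$ and read off $ed(f)f=-ed(e)f$, the same computation up to symmetry, and your bookkeeping of which terms vanish is accurate. The equal case is where you genuinely diverge. The paper substitutes $r=e$ into \eqref{eq-ed(r)e}, which yields $ed(e)e=-ed(e)e$, i.e.\ $2\,ed(e)e=0$ --- exactly the asserted sum, but nothing more. You instead left-multiply the identity $d(e)=d(e)e+ed(e)$ (which is \eqref{eq-d(r^2)} at $r=e$) by $e$ and cancel, obtaining the strictly stronger conclusion $ed(e)e=0$. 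This is a real difference in a paper that deliberately avoids the hypothesis $\ch R\ne 2$: from $2\,ed(e)e=0$ alone one cannot deduce $ed(e)e=0$, whereas your argument shows each summand vanishes separately, and it uses only the first defining identity of a Jordan derivation rather than \eqref{eq-ed(r)e}, which descends from \eqref{eq-d(rsr)}. Both arguments fully suffice for the lemma as stated; yours is marginally more elementary and yields a slightly stronger fact along the way.
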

\begin{proof}
 Suppose that $e$ and $f$ are orthogonal. Substitute $r=e$ into~\eqref{eq-ed(r)f}:
$$
 ed(e)f=ed(ef)f-ed(e)ef-ed(f)f+ed(fe)f,
$$
which is $-ed(f)f$, as $ef=fe=0$.

If $e=f$, then set $r=e$ in~\eqref{eq-ed(r)e}:
$$
 ed(e)e=ed(e)e-ed(e)e-ed(e)e=-ed(e)e.
$$
\end{proof}

Given a map $d:R\to R$ and $e\in E(R)$, denote by $d_e$ the map $eRe\to eRe$ with $d_e(r)=ed(r)e$, $r\in eRe$.

\begin{rem}\label{rem-(d_f)_e}
  Let $d$ be a map $R\to R$ and $e,f\in E(R)$ with $ef=fe=e$. Then $eRe\subseteq fRf$ and $d_e=(d_f)_e$.
\end{rem}
\noindent Indeed, $eRe=feRef\subseteq fRf$, and for any $r\in eRe$ one has
$$
d_e(r)=ed(r)e=efd(r)fe=ed_f(r)e=(d_f)_e(r).
$$

\begin{lem}\label{lem-d_e}
 Let $d$ be a derivation (respectively Jordan derivation) of $R$. Then $d_e$ is a derivation (respectively Jordan derivation) of the ring $eRe$.
\end{lem}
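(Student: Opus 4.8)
The plan is to prove both statements by direct computation from the defining identities, resting on the single observation that $e$ is the identity of the subring $eRe$: for every $r\in eRe$ one has $er=re=r$. Hence an idempotent $e$ adjacent to any factor taken from $eRe$ may be freely inserted or deleted. Since $d$ is additive and left and right multiplication by $e$ are additive, $d_e$ is additive; only the multiplicative identities remain.

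I would first treat the derivation case, which serves as the model for the rest. For $r,s\in eRe$ we have $rs\in eRe$, so
\[
 d_e(rs)=ed(rs)e=e\bigl(d(r)s+rd(s)\bigr)e=ed(r)se+erd(s)e.
\]
In the first term $se=s$ and $es=s$ let us rewrite $ed(r)se=ed(r)s=(ed(r)e)s=d_e(r)s$; symmetrically, $er=r$ and $re=r$ give $erd(s)e=rd(s)e=r(ed(s)e)=r\,d_e(s)$. Therefore $d_e(rs)=d_e(r)s+r\,d_e(s)$, and $d_e$ is a derivation of $eRe$.

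For the Jordan case I would verify \eqref{eq-d(r^2)} and \eqref{eq-d(rsr)} in turn. Applying \eqref{eq-d(r^2)} to $d$ and compressing exactly as above turns $d_e(r^2)=e\bigl(d(r)r+rd(r)\bigr)e$ into $d_e(r)r+r\,d_e(r)$. Likewise, for $r,s\in eRe$ we have $rsr\in eRe$, and \eqref{eq-d(rsr)} gives $d_e(rsr)=e\bigl(d(r)sr+rd(s)r+rsd(r)\bigr)e$; absorbing the outer idempotents into the neighbouring $eRe$-factors of each of the three summands produces $d_e(r)sr+r\,d_e(s)r+rs\,d_e(r)$, which is precisely \eqref{eq-d(rsr)} for $d_e$.

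I do not expect a genuine obstacle: the argument is pure bookkeeping with the idempotent $e$. The only term requiring slight care is the middle summand $erd(s)re$ of \eqref{eq-d(rsr)}, where \emph{both} outer copies of $e$ must be cancelled --- the left one through $r$ via $er=r$ and the right one through the trailing $r$ via $re=r$ --- after which one recognises $rd(s)r=r(ed(s)e)r=r\,d_e(s)r$. Once the cancellation rules $er=re=r$ are applied systematically, every summand assumes the required form and both implications follow.
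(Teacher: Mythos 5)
Your proof is correct and follows essentially the same route as the paper: compress $d$ by $e$ on both sides and use $er=re=r$ for $r\in eRe$ to absorb the idempotents into neighbouring factors. The only difference is that the paper carries out the computation for the derivation case and declares the Jordan case ``similar,'' whereas you write out the verification of \eqref{eq-d(r^2)} and \eqref{eq-d(rsr)} explicitly --- a harmless elaboration, not a different argument.
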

\begin{proof}
 We prove the assertion, when $d$ is a derivation of $R$ (the proof for a Jordan derivation is similar).

 Clearly, $d_e$ is additive. Taking $r,s\in eRe$, one sees that
 \begin{align*}
  d_e(rs)=ed(rs)e&=e(d(r)s+rd(s))e=ed(r)se+erd(s)e\\
  &=ed(r)es+red(s)e=d_e(r)s+rd_e(s).
 \end{align*}
 Here we used the obvious fact that $r$ and $s$ commute with $e$.
\end{proof}


\begin{lem}\label{lem-ed(rs)f}
 Let $d$ be a Jordan derivation of $R$, $e,f,g$ a triple of idempotents of $R$, any two distinct elements of which are orthogonal and at least two elements of which are different. Then for all $r\in eRg$ and $s\in gRf$ one has
 \begin{align}\label{eq-ed(rs)f}
  ed(rs)f=ed(r)s+rd(s)f.
 \end{align}
\end{lem}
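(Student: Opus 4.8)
The plan is to obtain \eqref{eq-ed(rs)f} in one stroke, without separating the possible coincidences among $e,f,g$, by feeding the triple $r,g,s$ into Herstein's identity \eqref{eq-d(rst+tsr)}. Since $r\in eRg$ and $s\in gRf$ we have $rg=r$ and $gs=s$, so $rgs=rs$, and \eqref{eq-d(rst+tsr)} reads
\[
 d(rgs+sgr)=d(r)gs+rd(g)s+rgd(s)+d(s)gr+sd(g)r+sgd(r).
\]
First I would argue that the ``reverse'' product vanishes, $sgr=0$: if $f\ne g$ then $fg=0$, hence $sg=0$; and if $f=g$, then the hypothesis that the three idempotents are not all equal forces $e\ne g$, so $ge=0$ and again $sgr=0$. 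Consequently the left-hand side equals $d(rs)$, and multiplying the whole identity on the left by $e$ and on the right by $f$ turns it into an expression for $ed(rs)f$.

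The bulk of the bookkeeping is then to check that, after this projection, the last three terms die. Using $er=r$, $rg=r$, $gs=s$ and $sf=s$, the first and third terms collapse to the two summands $ed(r)s$ and $rd(s)f$ that we want. For the terms $d(s)gr$, $sd(g)r$ and $sgd(r)$ one checks, configuration by configuration, that one of the one-sided products $gr$, $rf$, $es$, $sg$ is annihilated by orthogonality (for instance $eg=0$ whenever $e\ne g$, and $gf=0$ whenever $g\ne f$); this is the only place where a small case distinction enters, and in each case exactly one such product is zero. What remains is
\[
 ed(rs)f=ed(r)s+rd(g)s+rd(s)f,
\]
so everything comes down to the spurious middle term $rd(g)s$.

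Disposing of $rd(g)s$ is the step I expect to be the real obstacle. \cref{lem-ed(e)f+ed(f)f} applied at $e=f=g$ only gives $2\,gd(g)g=0$, which is useless without a hypothesis on $\ch R$, so I would instead establish the sharper identity $gd(g)g=0$ directly from \eqref{eq-d(r^2)}: setting $r=g$ gives $d(g)=d(g)g+gd(g)$, and multiplying on the left by $g$ and cancelling the common summand $gd(g)$ leaves $gd(g)g=0$. Because $r=rg$ and $s=gs$, this yields $rd(g)s=r\,(gd(g)g)\,s=0$, and \eqref{eq-ed(rs)f} follows. The virtue of this route is that it replaces the four coincidence cases of the statement by a single computation whose only genuinely new input is the torsion-free fact $gd(g)g=0$.
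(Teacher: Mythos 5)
Your proof is correct, and it takes a genuinely different route from the paper's. The paper works from the two-variable identity \eqref{eq-d(rs+sr)}, splitting into the cases $e\ne f$ (where $sr=0$) and $e=f\ne g$ (where $sr$ need not vanish), and kills the unwanted terms sub-case by sub-case using \cref{cor-ed(frf)e}. You instead feed $(r,g,s)$ into Herstein's identity \eqref{eq-d(rst+tsr)}, and the inserted middle factor $g$ does two jobs at once: it forces the reverse product $sgr$ to vanish in every admissible configuration --- including the paper's delicate case $e=f\ne g$, where $sr$ itself can be nonzero but $sg=0$ --- and, after projecting by $e$ on the left and $f$ on the right, it reduces the case analysis to observing that each of the three cross terms contains one of the products $gr$, $rf$, $es$, $sg$ annihilated by orthogonality. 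The price is the spurious term $rd(g)s$, which you correctly remove via $gd(g)g=0$; your derivation of this from \eqref{eq-d(r^2)} (left-multiply $d(g)=d(g)g+gd(g)$ by $g$ and cancel the summand $gd(g)$) is valid and characteristic-free, whereas \cref{lem-ed(e)f+ed(f)f} would indeed only give $2gd(g)g=0$, exactly as you observe. What your route buys is uniformity: it bypasses \cref{cor-ed(frf)e} entirely and collapses the paper's main case split into a single computation; what the paper's route buys is that it reuses lemmas already established for other purposes and stays within two-variable identities. Two small inaccuracies in your write-up, neither of which is a gap: $gd(g)g=0$ is not a ``torsion-free fact'' (no torsion hypothesis is involved, it is simply characteristic-free), and ``in each case exactly one such product is zero'' is false as stated --- when $e,f,g$ are pairwise distinct all four products vanish; what is true, and is all you need, is that in every admissible configuration each of the three cross terms contains at least one vanishing factor.
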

\begin{proof}
 Suppose first that $e\ne f$. Then $sr=0$, so by~\eqref{eq-d(rs+sr)}
$$
 d(rs)=d(r)s+rd(s)+d(s)r+sd(r).
$$
To get~\eqref{eq-ed(rs)f}, it is enough to prove that
\begin{align}\label{eq-ed(s)rf=esd(r)f=0}
 ed(s)rf=esd(r)f=0.
\end{align}
If $e\ne g\ne f$, then~\eqref{eq-ed(s)rf=esd(r)f=0} follows from $rf=es=0$. Consider now the subcase $e=g\ne f$. We immediately have $ed(s)rf=0$, as $rf=0$. Furthermore, since $gs=sf$, then
\begin{align*}
 esd(r)f=gsd(r)f=sfd(r)f,
\end{align*}
which is zero by Corollary~\ref{cor-ed(frf)e}. The subcase $e\ne g=f$ is symmetric to the previous one.

It remains to consider the case $e=f\ne g$. Now $sr$ may be nonzero, however $ed(sr)f=ed(sr)e=0$ thanks to Corollary~\ref{cor-ed(frf)e}. As above,~\eqref{eq-ed(rs)f} is explained by~\eqref{eq-ed(s)rf=esd(r)f=0}, because $rf=es=0$.
 \end{proof}

\begin{cor}\label{cor-ed(rs)f}
 Let $d$ be a Jordan derivation of $R$. Then $d$ satisfies~\eqref{eq-ed(rs)f} for all triples $e,f,g\in E(R)$, any two distinct elements of which are orthogonal, if and only if $d_e$ is a derivation of $eRe$ for all $e\in E(R)$.
\end{cor}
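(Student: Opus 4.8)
The plan is to observe that \cref{lem-ed(rs)f} already does almost all the work, and to isolate the single case it leaves open. That lemma asserts that \eqref{eq-ed(rs)f} holds for \emph{every} Jordan derivation whenever the triple $e,f,g$ (with any two distinct members orthogonal) has at least two distinct members. Hence, for a Jordan derivation $d$, the condition ``$d$ satisfies \eqref{eq-ed(rs)f} for all admissible triples'' carries no information in those cases; its entire content is concentrated in the remaining case $e=f=g$. Note that $e=f=g$ is itself an admissible triple, since then there are no two distinct elements whose orthogonality would need checking, so the hypothesis is vacuously met.

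First I would unpack \eqref{eq-ed(rs)f} in this diagonal case. With $e=f=g$ we have $r,s\in eRe$, so $er=re=r$ and $es=se=s$. Using these identities, each of the three terms collapses onto $eRe$: indeed $ed(rs)f=ed(rs)e=d_e(rs)$, while $ed(r)s=ed(r)es=d_e(r)s$ and $rd(s)f=red(s)e=rd_e(s)$. Therefore, when $e=f=g$, equation \eqref{eq-ed(rs)f} is \emph{equivalent} to
\begin{align*}
 d_e(rs)=d_e(r)s+rd_e(s)\quad\text{for all }r,s\in eRe,
\end{align*}
that is, to $d_e$ being a derivation of $eRe$.

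The two implications then follow at once. If $d_e$ is a derivation of $eRe$ for every $e\in E(R)$, then \eqref{eq-ed(rs)f} holds in the diagonal case by the equivalence just established and in every other admissible case by \cref{lem-ed(rs)f}; hence $d$ satisfies \eqref{eq-ed(rs)f} for all admissible triples. Conversely, if $d$ satisfies \eqref{eq-ed(rs)f} for all admissible triples, then in particular it does so for the admissible triple $e=f=g$ for each fixed $e$, whence $d_e$ is a derivation of $eRe$.

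Since the argument is essentially a bookkeeping reduction built on \cref{lem-ed(rs)f}, there is no serious obstacle. The only points requiring a little care are verifying that the diagonal triple $e=f=g$ genuinely falls under the hypothesis (the orthogonality requirement being vacuous there) and checking the term-by-term simplification, both of which are immediate from $r,s\in eRe$.
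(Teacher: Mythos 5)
Your proof is correct and follows exactly the paper's argument: Lemma~\ref{lem-ed(rs)f} settles every triple with at least two distinct members, so the condition reduces to the diagonal case $e=f=g$, which (after the collapse $er=re=r$, $es=se=s$) is precisely the statement that $d_e$ is a derivation of $eRe$. The extra term-by-term verification you include is just a spelled-out version of what the paper leaves implicit.
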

\noindent Indeed, in view of Lemma~\ref{lem-ed(rs)f} equality~\eqref{eq-ed(rs)f} holds for all such triples $e,f,g\in E(R)$ if and only if it holds for $e=f=g\in E(R)$, that is $d_e(rs)=d_e(r)s+rd_e(s)$ with $r,s\in eRe$.

\section{Jordan derivations of a ring with certain set of pairwise orthogonal idempotents}\label{sec-tech-fin}

From now on suppose that the ring $R$ admits a set of pairwise orthogonal idempotents $E$ such that, given $r\in R$ and $e,f\in E$, the set
\begin{align}\label{eq-E'}
 E(e,r,f):=\{g\in E\mid erg\ne 0\mbox{ and }\exists s\in R: gsf\ne 0\}\mbox{ is finite.}
\end{align}
Taking additionally $s\in R$, we see that
\begin{align*}
 E(e,r,s,f):=\{g\in E\mid ergsf\ne 0\}\subseteq E(e,r,f),
\end{align*}
so $|E(e,r,s,f)|<\infty$. We shall also require that $R$ satisfies
\begin{align}\label{eq-ersf=sum-ergsf}
 ersf=\sum_{g\in E(e,r,s,f)}ergsf,
\end{align}
where the latter sum is defined to be $0$, when $E(e,r,s,f)=\emptyset$.

\begin{rem}\label{rem-E'}
 It is obvious that in~\eqref{eq-ersf=sum-ergsf} one can replace $E(e,r,s,f)$ by any finite subset of $E$ containing $E(e,r,s,f)$.
\end{rem}

\begin{lem}\label{lem-ergd(g)hsf-ne-0-finite}
 Let $d:R\to R$ be a map. Then for all $r,s\in R$ and $e,f\in E$ the set
$$
 X=\{(g,h)\in E\times E\mid ergd(g)hsf\ne 0\}
$$
is finite.
\end{lem}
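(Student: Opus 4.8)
The plan is to bound the two coordinates of the pairs $(g,h)\in X$ separately, in each case recognizing the relevant condition as membership in one of the finite sets $E(e,r',f)$ from \eqref{eq-E'}. It is worth noting at the outset that $d$ enters only through the element $d(g)\in R$; no derivation identity is needed, so the whole argument rests on the hypothesis \eqref{eq-E'} alone. The single elementary fact I will invoke repeatedly is that in a nonzero product every contiguous sub-product is again nonzero (were some contiguous factor to vanish, so would the entire product).

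First I would control the possible first coordinates. Let $(g,h)\in X$, so that $ergd(g)hsf\ne 0$. Reading off two of its contiguous sub-products, we get $erg\ne 0$ and $gd(g)hsf\ne 0$; since $gd(g)hsf=g\,(d(g)hs)\,f$, the latter exhibits an element $s''=d(g)hs\in R$ with $gs''f\ne 0$. By the very definition of $E(e,r,f)$ in \eqref{eq-E'}, this means $g\in E(e,r,f)$. Hence every first coordinate occurring in $X$ lies in the finite set $E(e,r,f)$.

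Next, fixing such a $g$, I would bound the second coordinates $h$ that occur with it. From $ergd(g)hsf\ne 0$ the contiguous sub-products $ergd(g)h$ and $hsf$ are both nonzero. Writing $r'=rgd(g)\in R$, the first says $er'h\ne 0$, while the second provides $s'=s\in R$ with $hs'f\ne 0$; that is, $h\in E(e,rgd(g),f)$, which is finite by \eqref{eq-E'}. Consequently
$$
 X\subseteq\bigcup_{g\in E(e,r,f)}\{g\}\times E(e,rgd(g),f)
$$
is a finite union of finite sets, and therefore finite.

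I do not foresee a genuine obstacle; the argument is purely combinatorial once \eqref{eq-E'} is available. The one point that requires a moment's attention is the second step, where the ``middle'' factor $rgd(g)$ must be grouped into the single left argument of $E(e,\,\cdot\,,f)$, so that \eqref{eq-E'} is applied with $r'=rgd(g)$ in place of the original $r$; once this regrouping is spotted, both coordinates fall under the same finiteness hypothesis and the proof closes.
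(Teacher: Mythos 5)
Your proof is correct and follows essentially the same route as the paper: both bound the first coordinates by the finite set $E(e,r,f)$ and then, for each fixed $g$, bound the admissible $h$ by a second application of \eqref{eq-E'} (the paper instantiates it as $E(g,d(g),f)$, you as $E(e,rgd(g),f)$ --- equally valid). The only cosmetic difference is that the paper argues by contradiction while you exhibit $X$ directly as a finite union of finite sets.
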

\begin{proof}
 Suppose on the contrary that $X$ is infinite. Note that if $(g,h)\in X$, then $erg\ne 0$ and $gd(g)hsf\ne 0$, so $g\in E(e,r,f)$. In view of~\eqref{eq-E'} there is only a finite number of such $g$. Hence, one can find $g'\in E$ and infinitely many $h\in E$, such that $g'd(g')hsf\ne 0$. The latter implies that $E(g',d(g'),f)$ is infinite. This is impossible due to~\eqref{eq-E'}.
\end{proof}

\begin{lem}\label{lem-d'-der}
 Let $d$ be a Jordan derivation of $R$, such that for all $e\in E$ the map $d_e$ is a derivation of $eRe$. Suppose that, given $r\in R$, there exists a unique $d'(r)\in R$ with
 \begin{align}\label{eq-d'}
  ed'(r)f=ed(erf)f-ed(e)rf-erd(f)f
 \end{align}
 for all $e,f\in E$. Then the map $d':R\to R$ is a derivation of $R$.
\end{lem}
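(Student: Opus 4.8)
The plan is to construct $d'$ componentwise, reading off its values through the ``matrix entries'' $e\,d'(\cdot)\,f$ with $e,f\in E$, and to verify both additivity and the Leibniz rule at that level, invoking the uniqueness hypothesis to pass back to $d'$ itself.

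Additivity is immediate: for fixed $e,f\in E$ the right-hand side of \eqref{eq-d'} is additive in $r$ because $d$ is, so $e\,d'(r_1+r_2)\,f=e\bigl(d'(r_1)+d'(r_2)\bigr)f$ for all $e,f$, whence $d'(r_1+r_2)=d'(r_1)+d'(r_2)$ by uniqueness. For the Leibniz rule, uniqueness reduces the task to checking that $d'(r)s+rd'(s)$ satisfies \eqref{eq-d'} with $r$ replaced by $rs$; equivalently, I must establish
\[
 e\,d'(rs)\,f=e\,d'(r)\,sf+er\,d'(s)\,f\qquad(e,f\in E).
\]

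To prove this I fix $e,f\in E$ and choose, via \cref{rem-E'} and the finiteness in \eqref{eq-E'}, a single finite $G\subseteq E$ large enough to validate simultaneously all the resolutions \eqref{eq-ersf=sum-ergsf} occurring below. Expanding the left side by \eqref{eq-d'} gives $ed(ersf)f-ed(e)rsf-ersd(f)f$, and inserting $ersf=\sum_{g\in G}ergsf$ under the additive map $d$ turns $ed(ersf)f$ into $\sum_{g\in G}ed\bigl((erg)(gsf)\bigr)f$. Since $erg\in eRg$ and $gsf\in gRf$ with $e,g,f\in E$ pairwise orthogonal or equal, \cref{cor-ed(rs)f} splits each summand as $ed(erg)(gsf)+(erg)d(gsf)f$. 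Applying \eqref{eq-d'} once more to the pairs $(e,g)$ and $(g,f)$ rewrites $ed(erg)g=ed'(r)g+ed(e)rg+erd(g)g$ and $gd(gsf)f=gd'(s)f+gd(g)sf+gsd(f)f$. Re-summing over $g$, the $d(e)$- and $d(f)$-contributions reassemble by \eqref{eq-ersf=sum-ergsf} into exactly $ed(e)rsf$ and $ersd(f)f$, cancelling the two negative terms, while the remaining main terms assemble into $ed'(r)sf+erd'(s)f$. What is left over is precisely $\sum_{g\in G}er\,d(g)\,sf$, so everything reduces to showing that this ``diagonal'' correction vanishes.

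This last point is the crux and the main obstacle. Applying \eqref{eq-d(r^2)} to the idempotent $g$ gives $d(g)=d(g)g+gd(g)$ and, after sandwiching, $gd(g)g=0$. I then resolve each of $\sum_g er\,d(g)g\,sf$ and $\sum_g er\,gd(g)\,sf$ by a second application of \eqref{eq-ersf=sum-ergsf}, inserting an idempotent $h\in G$ on the appropriate side; the identity $gd(g)g=0$ kills the $h=g$ terms, and both sums may be extended to the full square $G\times G$ because the extra summands vanish. After relabelling one of the double sums, the total becomes $\sum_{g,h\in G}er\bigl(gd(g)h+gd(h)h\bigr)sf$, and each bracket is $0$ by \cref{lem-ed(e)f+ed(f)f}, which applies exactly because any two elements of $E$ are orthogonal or equal. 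Hence $\sum_{g\in G}er\,d(g)\,sf=0$, the displayed componentwise identity holds for all $e,f\in E$, and uniqueness yields $d'(rs)=d'(r)s+rd'(s)$. The real work is the bookkeeping that lets both halves of the diagonal term be resolved over one common finite $G$ so that \cref{lem-ed(e)f+ed(f)f} can be applied termwise; the existence of such a $G$ is guaranteed by the finiteness condition \eqref{eq-E'}.
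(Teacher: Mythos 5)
Your overall route coincides with the paper's: additivity via uniqueness, and for the Leibniz rule the same three ingredients --- the resolutions \eqref{eq-ersf=sum-ergsf} (with \cref{rem-E'}), the splitting of $ed(ergsf)f$ via \cref{cor-ed(rs)f}, and the diagonal cancellation via \cref{lem-ed(e)f+ed(f)f} --- only run in the opposite direction: you expand $ed'(rs)f$ and work towards $ed'(r)sf+erd'(s)f$, while the paper expands the latter and works towards the former, which is immaterial. The algebra itself, including the termwise identity $gd(g)h+gd(h)h=0$ on a square of indices, is sound.

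The gap sits exactly at the point you yourself call ``the real work'': the existence of a single finite $G\subseteq E$ over which \emph{all} resolutions can be taken simultaneously is not ``guaranteed by the finiteness condition \eqref{eq-E'}'' in any immediate way, and you never prove it. The requirement on $G$ is self-referential: besides the five first-stage sets, $G$ must contain, \emph{for every} $g\in G$, the second-stage sets $\{h\in E\mid erhd(g)gsf\ne 0\}$ and $\{h\in E\mid ergd(g)hsf\ne 0\}$; enlarging $G$ to absorb these produces new elements $g$ carrying new requirements, so naive iteration need not terminate. What rescues your claim is a pair of uniform bounds: $\{h\mid erhd(g)gsf\ne 0\}\subseteq E(e,r,f)$ for every $g$ (take $d(g)gs$ as the witness in \eqref{eq-E'}), and $\{h\mid ergd(g)hsf\ne 0\}$ is contained in the second projection of $\{(g,h)\in E\times E\mid ergd(g)hsf\ne 0\}$, whose finiteness is precisely \cref{lem-ergd(g)hsf-ne-0-finite} --- a lemma you never invoke. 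With these containments one may take $G$ to be the union of the first-stage sets with $E(e,r,f)$ and the paper's $E_6$, and your square cancellation then goes through. Note that the paper's proof avoids demanding such a ``closed'' $G$ altogether: it keeps two nested index sets $E'\subseteq E''$, lets the two double sums run over $E'\times E''$ and $E''\times E'$, and cancels them by observing that all nonzero terms $ergd(g)hsf$ lie in $E_6\times E_6\subseteq E'\times E'$, hence inside both index sets. Either way, \cref{lem-ergd(g)hsf-ne-0-finite} is indispensable, and as written your argument is incomplete without it.
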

\begin{proof}
First of all observe from~\eqref{eq-d'} that
$$
 ed'(r+s)f=ed(e(r+s)f)f-ed(e)(r+s)f-e(r+s)d(f)f=e(d'(r)+d'(s))f
$$
due to additivity of $d$. By the uniqueness of $d'(r+s)$ we have $d'(r+s)=d'(r)+d'(s)$.

Fix $r,s\in R$ and $e,f\in E$. We shall show that $e(d'(r)s+rd'(s))f=ed'(rs)f$. By\cref{eq-ersf=sum-ergsf,eq-d',rem-E'} we have
\begin{align}
 ed'(r)sf&=\sum_{g\in E'}ed'(r)gsf\notag\\
 &=\sum_{g\in E'}(ed(erg)g-ed(e)rg-erd(g)g)sf,\label{eq-ed(r)sf}\\
 erd'(s)f&=\sum_{g\in E'}ergd'(s)f\notag\\
 &=\sum_{g\in E'}er(gd(gsf)f-gd(g)sf-gsd(f)f)\label{eq-erd(s)f},
\end{align}
where $E'$ is the union of finite sets $\bigcup_{i=1}^6E_i$ with
\begin{align*}
 E_1&=E(e,d'(r),s,f),\\
 E_2&=E(e,r,d'(s),f),\\
 E_3&=E(e,d(e)r,s,f),\\
 E_4&=E(e,r,sd(f),f),\\
 E_5&=E(e,r,s,f),\\
 E_6&\mbox{ being such that }\{(g,h)\in E\times E\mid ergd(g)hsf\ne 0\}\subseteq E_6\times E_6
\end{align*}
(the latter is finite by Lemma~\ref{lem-ergd(g)hsf-ne-0-finite}). Adding~\eqref{eq-ed(r)sf} and~\eqref{eq-erd(s)f}, we get
  \begin{align}
   e(d'(r)s+rd'(s))f&=\sum_{g\in E'}(ed(erg)gsf+ergd(gsf)f)\label{eq-ed(ergsf)f}\\
   &-\sum_{g\in E'}ed(e)rgsf-\sum_{g\in E'}ergsd(f)f\label{eq-ed(e)rsf-ersd(f)f}\\
   &-\sum_{g\in E'}(erd(g)gsf+ergd(g)sf).\label{eq-sum-erd(g)gsf-ergd(g)sf}
  \end{align}
  Clearly,~\eqref{eq-ed(e)rsf-ersd(f)f} is
  $$
   -\sum_{g\in E_3}ed(e)rgsf-\sum_{g\in E_4}ergsd(f)f=-ed(e)rsf-ersd(f)f.
  $$
  By Corollary~\ref{cor-ed(rs)f} the right-hand side of~\eqref{eq-ed(ergsf)f} is
  \begin{align*}
   \sum_{g\in E'}ed(ergsf)f=ed\left(\sum_{g\in E'}ergsf\right)f=ed\left(\sum_{g\in E_5}ergsf\right)f=ed(ersf)f.
  \end{align*}

By~\eqref{eq-ersf=sum-ergsf} the sum~\eqref{eq-sum-erd(g)gsf-ergd(g)sf} equals
\begin{align}\label{eq-sum-erhd(g)gsf-ergd(g)hsf}
 -\sum_{g\in E'}\left(\sum_{h\in E_7}erhd(g)gsf+\sum_{h\in E_8}ergd(g)hsf\right),
\end{align}
where $E_7$ and $E_8$ are finite subsets of $E$, more precisely,
\begin{align*}
 E_7&=\bigcup_{g\in E'}E(e,r,d(g),g),\\
 E_8&=\bigcup_{g\in E'}E(g,d(g),s,f).
\end{align*}
Setting $E''=E'\cup E_7\cup E_8$, we conclude that~\eqref{eq-sum-erhd(g)gsf-ergd(g)hsf} is
\begin{align*}
  &-\sum_{(g,h)\in E'\times E''}erhd(g)gsf-\sum_{(g,h)\in E'\times E''}ergd(g)hsf
\end{align*}
Interchanging $g$ and $h$ in the first sum and using Lemma~\ref{lem-ed(e)f+ed(f)f}, we get
\begin{align}
  &-\sum_{(g,h)\in E''\times E'}ergd(h)hsf-\sum_{(g,h)\in E'\times E''}ergd(g)hsf\notag\\
  &=\sum_{(g,h)\in E''\times E'}ergd(g)hsf-\sum_{(g,h)\in E'\times E''}ergd(g)hsf.\label{eq-sum-ergd(g)hsf}
 \end{align}
Since
$$
 \{(g,h)\in E\times E\mid ergd(g)hsf\ne 0\}\subseteq E_6\times E_6\subseteq E'\times E'\subseteq(E'\times E'')\cap(E''\times E'),
$$
it follows that~\eqref{eq-sum-ergd(g)hsf} equals
\begin{align*}
  \sum_{(g,h)\in E_6\times E_6}ergd(g)hsf-\sum_{(g,h)\in E_6\times E_6}ergd(g)hsf=0.
\end{align*}

Thus,
  \begin{align*}
   e(d'(r)s+rd'(s))f=ed(ersf)f-ed(e)rsf-ersd(f)f,
  \end{align*}
  the latter being $ed'(rs)f$ by~\eqref{eq-d'}. In view of the uniqueness of $d'(rs)$, one has $d'(rs)=d'(r)s+rd'(s)$. Since $r$ and $s$ were arbitrary elements of $R$, the map $d'$ is a derivation.
 \end{proof}

\section{Jordan derivations of \texorpdfstring{$FI(\cC)$}{FI(C)}}\label{sec-FI(C)}


We first specify Lemma~\ref{lem-ed(r)f} for $FI(\cC)$ and the idempotents $e_x,e_y\in FI(\cC)$.
\begin{lem}\label{lem-d-alpha_xy}
 Let $d$ be a Jordan derivation of $FI(\cC)$. Then for any $\alpha\in FI(\cC)$ and for all $x\le y$:
 \begin{align*}
  d(\alpha)_{xy}=d(\alpha_{xy}[x,y])_{xy}-(d(e_x)\alpha)_{xy}-(\alpha d(e_y))_{xy}.
 \end{align*}
\end{lem}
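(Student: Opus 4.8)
The statement is the specialization of Lemma~\ref{lem-ed(r)f} to $R=FI(\cC)$ with the idempotents $e=e_x$ and $f=e_y$, so the plan is to compute $e_xd(\alpha)e_y$ from that lemma and then read off its $[x,y]$-coefficient by means of~\eqref{eq-e_x-alpha-e_y}. Since Lemma~\ref{lem-ed(r)f} has two clauses, one for orthogonal idempotents and one for a single idempotent, I would split the argument according to whether $x=y$ or $x<y$.

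When $x=y$ I would apply the ``moreover'' part~\eqref{eq-ed(r)e} with $e=e_x$ and $r=\alpha$. Using~\eqref{eq-e_x-alpha-e_y} one has $e_x\alpha e_x=\alpha_{xx}[x,x]$, while $e_xd(e_x)\alpha e_x=e_x(d(e_x)\alpha)e_x=(d(e_x)\alpha)_{xx}[x,x]$ and similarly $e_x\alpha d(e_x)e_x=(\alpha d(e_x))_{xx}[x,x]$; reading off the $[x,x]$-coefficient of~\eqref{eq-ed(r)e} then gives the claim with $y=x$. When $x<y$ I would first note that $e_x$ and $e_y$ are orthogonal, which is immediate from the convolution formula~\eqref{eq-convolution}: the only nonzero entries of $e_x,e_y$ are $\id_x$ at $[x,x]$ and $\id_y$ at $[y,y]$, so both products vanish as soon as $x\ne y$. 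Applying~\eqref{eq-ed(r)f} with $e=e_x$, $f=e_y$, $r=\alpha$ and extracting the $[x,y]$-coefficient of each term via~\eqref{eq-e_x-alpha-e_y} produces the desired identity together with one extra summand, namely the $[x,y]$-coefficient of $e_xd(e_y\alpha e_x)e_y$.

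The only point needing care is the vanishing of this extra term. Here I would use that $\le$ is a partial order on $\ob{\cC}$: from $x<y$ and antisymmetry we get $y\not\le x$, so $e_y\alpha e_x=0$ by~\eqref{eq-e_x-alpha-e_y}, and hence $d(e_y\alpha e_x)=d(0)=0$ by additivity of $d$. With this term gone, the extracted identity is exactly the asserted formula. Everything else is routine bookkeeping with~\eqref{eq-e_x-alpha-e_y}, so I do not expect any real obstacle beyond keeping track of the case distinction and this single cancellation.
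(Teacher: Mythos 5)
Your proposal is correct and follows essentially the same route as the paper: split into the cases $x=y$ (using~\eqref{eq-ed(r)e}) and $x<y$ (using~\eqref{eq-ed(r)f} after noting that $e_x,e_y$ are orthogonal), then read off the $[x,y]$-coefficient via~\eqref{eq-e_x-alpha-e_y}, with the extra term $e_xd(e_y\alpha e_x)e_y$ killed because $x<y$ forces $y\not\le x$, hence $e_y\alpha e_x=0$. Your treatment is just a more explicit write-up of the paper's argument, which compresses the vanishing of the extra term into the remark that ``$x<y$ means $x\le y$ and $y\not\le x$.''
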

\begin{proof}
 Suppose first that $x<y$. Then the idempotents $e_x$ and $e_y$ are orthogonal, so by~\eqref{eq-ed(r)f}
 \begin{align*}
  e_xd(\alpha)e_y=e_xd(e_x\alpha e_y)e_y-e_xd(e_x)\alpha e_y-e_x\alpha d(e_y)e_y+e_xd(e_y\alpha e_x)e_y.
 \end{align*}
 Note that $x<y$ means that $x\le y$ and $y\not\le x$. It remains to apply~\eqref{eq-e_x-alpha-e_y}.

 When $x=y$, the result follows from~\eqref{eq-ed(r)e} with $e=e_x$ and~\eqref{eq-e_x-alpha-e_y}.
\end{proof}

%

\begin{lem}\label{lem-FI(C)-satisfies-fin-cond}
 The ring $FI(\cC)$ satisfies\cref{eq-E',eq-ersf=sum-ergsf} with $E$ being $\{e_x\}_{x\in\ob\cC}$.
\end{lem}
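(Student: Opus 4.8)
The plan is to verify the two finiteness-and-distributivity conditions \eqref{eq-E'} and \eqref{eq-ersf=sum-ergsf} directly from the definition of the convolution product \eqref{eq-convolution} and the description \eqref{eq-e_x-alpha-e_y} of $e_x\alpha e_y$, taking $E=\{e_x\}_{x\in\ob\cC}$. The central observation is that $e_x\alpha e_y=\alpha_{xy}[x,y]$ when $x\le y$ and is $0$ otherwise, so for a fixed $\alpha$ the condition $e_x\alpha e_z\ne 0$ already forces $x\le z$, and likewise the existence of $\beta$ with $e_z\beta e_y\ne 0$ forces $z\le y$. Thus the set $E(e_x,\alpha,e_y)$ from \eqref{eq-E'} is contained in the segment $[x,y]=\{z\mid x\le z\le y\}$, translated to objects of $\cC$.

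First I would handle the finiteness condition \eqref{eq-E'}. Fix $\alpha\in FI(\cC)$ and idempotents $e_x,e_y$. For $e_z\in E(e_x,\alpha,e_y)$ the remarks above give $x\le z\le y$ (so in particular $x\le y$; if $x\not\le y$ the set is empty and there is nothing to prove). Moreover $e_x\alpha e_z\ne 0$ means $\alpha_{xz}\ne 0_{xz}$, and we need some $\beta\in FI(\cC)$ with $e_z\beta e_y\ne 0$, which just says $z\le y$. Hence $E(e_x,\alpha,e_y)$ is contained in $\{z\mid x\le z\le y,\ \alpha_{xz}\ne 0_{xz}\}$. When $x<y$ this set is finite because $\alpha$ is a finitary series: by definition there are only finitely many pairs $u<v$ with $x\le u<v\le y$ and $\alpha_{uv}\ne 0_{uv}$, and in particular only finitely many $z$ with $x\le z\le y$ and $\alpha_{xz}\ne 0_{xz}$ (taking $u=x$, and noting $z=x$ contributes at most one extra object). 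The only point needing a separate look is the degenerate case $x=y$, where the segment $[x,x]$ is the single object $x$ and the set has at most one element; so finiteness holds in all cases.

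Next I would establish \eqref{eq-ersf=sum-ergsf}, i.e.\ $e_x\alpha\beta e_y=\sum_{z}e_x\alpha e_z\beta e_y$ for $\alpha,\beta\in FI(\cC)$, where $z$ ranges over $E(e_x,\alpha,\beta,e_y)$. By \eqref{eq-e_x-alpha-e_y} the left side is $(\alpha\beta)_{xy}[x,y]$, which by the convolution formula \eqref{eq-convolution} equals $\bigl(\sum_{x\le z\le y}\alpha_{xz}\beta_{zy}\bigr)[x,y]$. On the other hand each term $e_x\alpha e_z\beta e_y$ is nonzero only when $x\le z\le y$, in which case it equals $\alpha_{xz}\beta_{zy}[x,y]$ by two applications of \eqref{eq-e_x-alpha-e_y} together with composition in $\cC$. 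Summing over the (finitely many, by Remark~\ref{rem-E'}) indices $z$ for which the term is nonzero recovers exactly the convolution sum, so the two sides agree. Here I would invoke Remark~\ref{rem-E'} to replace $E(e_x,\alpha,\beta,e_y)$ by the larger finite set $\{z\mid x\le z\le y\}$ (finite when $x<y$, and a singleton when $x=y$), along which the sum is literally the convolution.

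The main obstacle, such as it is, is not analytic but bookkeeping: one must keep clear the distinction between objects $z\in\ob\cC$ and the idempotents $e_z\in E$, and correctly match the abstract conditions ``$e_x\alpha e_z\ne 0$ and $\exists\beta:e_z\beta e_y\ne 0$'' against the segment condition $x\le z\le y$ that drives finitariness. The genuinely load-bearing input is the \emph{finitary} hypothesis on the series of $FI(\cC)$, which is precisely what guarantees the finiteness in \eqref{eq-E'}; without it the set $\{z\mid x\le z\le y,\ \alpha_{xz}\ne 0\}$ could be infinite. Once finiteness is in hand, \eqref{eq-ersf=sum-ergsf} is just a restatement of the convolution formula \eqref{eq-convolution}, so the verification is essentially immediate.
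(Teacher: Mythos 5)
Your overall route is the paper's own: use \eqref{eq-e_x-alpha-e_y} to see that $E(e_x,\alpha,e_y)$ corresponds to a subset of $\{z\in[x,y]\mid\alpha_{xz}\ne 0_{xz}\}$, which is finite because $\alpha$ is a finitary series, and then prove \eqref{eq-ersf=sum-ergsf} by matching the sum over $E(e_x,\alpha,\beta,e_y)$ with the convolution sum $(\alpha\beta)_{xy}[x,y]$ from \eqref{eq-convolution} after discarding zero terms. That core argument is correct, including the treatment of the cases $x\not\le y$ and $x=y$.

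However, your closing justification contains a genuinely false claim: you propose to invoke Remark~\ref{rem-E'} to replace $E(e_x,\alpha,\beta,e_y)$ by ``the larger finite set $\{z\mid x\le z\le y\}$ (finite when $x<y$).'' The segment $[x,y]$ is \emph{not} finite in general: $\ob\cC$ is not assumed to be locally finite, and removing exactly that assumption is the point of finitary incidence rings (take, e.g., $P=\mathbb R$ with the usual order, where $[x,y]$ is uncountable). So Remark~\ref{rem-E'}, which requires a \emph{finite} superset of $E(e_x,\alpha,\beta,e_y)$, cannot be applied with $[x,y]$. What makes everything work is that the finitary condition on $\alpha$ bounds the set of $z\in[x,y]$ with $\alpha_{xz}\ne 0_{xz}$, hence the convolution sum $\sum_{x\le z\le y}\alpha_{xz}\beta_{zy}$ has only finitely many nonzero terms, and those terms are indexed precisely by $E(e_x,\alpha,\beta,e_y)$. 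Your preceding sentence (summing over the indices where the term is nonzero recovers the convolution sum) already says this correctly, so the faulty sentence is redundant rather than load-bearing; delete it and your proof coincides with the paper's. Still, the misstatement suggests a misreading of the setting and would be a real error if it were the step actually carrying the argument.
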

\begin{proof}
 Clearly, $E$ is a set of pairwise orthogonal idempotents of $FI(\cC)$. Let $\alpha\in FI(\cC)$ and $x,y\in\ob\cC$.

If $x\not\le y$, then for any $z\in\ob\cC$ either $x\not\le z$, or $z\not\le y$. In the first case $e_x\alpha e_z=0$, and in the second one $e_z\beta e_y=0$ for any $\beta\in FI(\cC)$, thanks to~\eqref{eq-e_x-alpha-e_y}. Hence $E(e_x,\alpha,e_y)=\emptyset$, so~\eqref{eq-E'} holds. As $e_x\alpha\beta e_y=0$ by~\eqref{eq-e_x-alpha-e_y}, equality~\eqref{eq-ersf=sum-ergsf} also takes place.

If $x\le y$, then $e_x\alpha e_z\ne 0$ means that $\alpha_{xz}\ne 0_{xz}$ (in particular, $x\le z$) and $\exists\beta\in FI(\cC): e_z\beta e_y\ne 0$ implies that $z\le y$. Therefore,
$$
 \{z\in\ob\cC\mid e_z\in E(e_x,\alpha,e_y)\}\subseteq\{z\in[x,y]\mid\alpha_{xz}\ne 0_{xz}\},
$$
the latter set being finite due to the fact that $\alpha$ is a finitary series, whence~\eqref{eq-E'}. As to~\eqref{eq-ersf=sum-ergsf}, observe from~\eqref{eq-e_x-alpha-e_y} that $e_x\alpha e_z\beta e_y=\alpha_{xz}\beta_{zy}[x,y]$, so
$$
 Z:=\{z\in\ob\cC\mid e_z\in E(e_x,\alpha,\beta,e_y)\}=\{z\in[x,y]\mid \alpha_{xz}\beta_{zy}\ne 0_{xy}\}.
$$
Moreover, by~\eqref{eq-convolution}
\begin{align*}
 e_x\alpha\beta e_y=(\alpha\beta)_{xy}[x,y]&=\left(\sum_{x\le z\le y}\alpha_{xz}\beta_{zy}\right)[x,y]=\left(\sum_{z\in Z}\alpha_{xz}\beta_{zy}\right)[x,y]\\
 &=\sum_{z\in Z}(\alpha_{xz}[x,z]\cdot\beta_{zy}[z,y])=\sum_{e_z\in E(e_x,\alpha,\beta,e_y)}e_x\alpha e_z\beta e_y,
\end{align*}
proving~\eqref{eq-ersf=sum-ergsf}.
\end{proof}

Let $d:FI(\cC)\to FI(\cC)$ and $x\in\ob\cC$. Observe that the ring $e_xFI(\cC)e_x$ may be identified with $\mor xx$ by $e_x\alpha e_x=\alpha_{xx}[x,x]\leftrightarrow\alpha_{xx}$. Denote by $d_x$ the map $\mor xx\to\mor xx$, which corresponds to $d_{e_x}:e_xFI(\cC)e_x\to e_xFI(\cC)e_x$ under this identification, namely, $d_x(\varphi)=d(\varphi[x,x])_{xx}$ for $\varphi\in\mor xx$.

\begin{prop}\label{prop-d-der-iff-d_x-der}
 Let $d$ be a Jordan derivation of $FI(\cC)$. Then $d$ is a derivation of $FI(\cC)$ if and only if for all $x\in\ob\cC$ the map $d_x$ is a derivation of $\mor xx$.
\end{prop}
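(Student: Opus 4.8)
The plan is to prove the two implications separately, using the machinery developed in the earlier sections. The forward direction is immediate: if $d$ is a derivation of $FI(\cC)$, then by \cref{lem-d_e} the map $d_{e_x}$ is a derivation of $e_xFI(\cC)e_x$ for each $x\in\ob\cC$; transporting this across the ring identification $e_xFI(\cC)e_x\cong\mor xx$ given before the statement, we obtain that $d_x$ is a derivation of $\mor xx$. So all the work lies in the converse.

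For the converse, suppose $d_x$ is a derivation of $\mor xx$ for every $x$. First I would translate this back into the language of idempotents: since $d_x$ corresponds to $d_{e_x}$ under the identification, the hypothesis says precisely that $d_{e_x}$ is a derivation of $e_xFI(\cC)e_x$ for every $x\in\ob\cC$. The goal is then to invoke \cref{lem-d'-der} with $E=\{e_x\}_{x\in\ob\cC}$, which by \cref{lem-FI(C)-satisfies-fin-cond} satisfies the finiteness condition \eqref{eq-E'} and the summation identity \eqref{eq-ersf=sum-ergsf}. That lemma produces a derivation $d'$ of $FI(\cC)$ out of the Jordan derivation $d$, provided the element $d'(\alpha)$ characterized by \eqref{eq-d'} exists and is unique for each $\alpha$. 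Once $d'$ is built, the remaining task is to show $d'=d$, which finishes the proof since $d'$ is a derivation.

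\textbf{The main obstacle} is verifying the hypothesis of \cref{lem-d'-der}, namely that for each $\alpha\in FI(\cC)$ there is a \emph{unique} $d'(\alpha)\in FI(\cC)$ satisfying
$$
 e_x d'(\alpha) e_y = e_x d(e_x\alpha e_y)e_y - e_x d(e_x)\alpha e_y - e_x\alpha d(e_y)e_y
$$
for all $x,y$. By \eqref{eq-e_x-alpha-e_y}, specifying the compressions $e_x d'(\alpha) e_y$ for all pairs $x\le y$ is exactly the same as specifying the coefficients $d'(\alpha)_{xy}$ of a formal series; uniqueness is then automatic, so the real content is \emph{existence}, i.e.\ checking that the series so defined is finitary. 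Here I would use \cref{lem-d-alpha_xy}: it shows that the prescribed coefficients are precisely $d(\alpha)_{xy}$, so the candidate series is $d(\alpha)$ itself, which lies in $FI(\cC)$ by hypothesis. This simultaneously settles existence and uniqueness and, as a bonus, identifies $d'$ with $d$.

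Concluding, since the right-hand side of \eqref{eq-d'} agrees coefficientwise with $d(\alpha)$ by \cref{lem-d-alpha_xy}, the element $d'(\alpha)=d(\alpha)$ satisfies \eqref{eq-d'} and is the unique series doing so. Thus \cref{lem-d'-der} applies and shows that $d'=d$ is a derivation of $FI(\cC)$, completing the converse and hence the proposition.
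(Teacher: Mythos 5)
Your proof is correct and follows essentially the same route as the paper: necessity via \cref{lem-d_e}, and sufficiency by using \cref{lem-d-alpha_xy} to identify the right-hand side of \eqref{eq-d'} (with $e=e_x$, $f=e_y$) with $d(\alpha)_{xy}$, so that $d'=d$ exists uniquely and \cref{lem-d'-der,lem-FI(C)-satisfies-fin-cond} apply. Your explicit remarks on why uniqueness is automatic and why existence reduces to finitariness are a slightly more careful rendering of what the paper leaves implicit.
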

\begin{proof}
The necessity is explained by Lemma~\ref{lem-d_e}.

For the sufficiency consider~\eqref{eq-d'} with $e=e_x$, $f=e_y$ and $r=\alpha\in FI(\cC)$:
$$
 d'(\alpha)_{xy}=d(\alpha_{xy}[x,y])_{xy}-(d(e_x)\alpha)_{xy}-(\alpha d(e_y))_{xy}.
$$
The right-hand side is $d(\alpha)_{xy}$ by Lemma~\ref{lem-d-alpha_xy}. Therefore, $d'=d$, in particular, $d'$ is a well-defined function $FI(\cC)\to FI(\cC)$. It remains to apply Lemmas~\ref{lem-d'-der} and~\ref{lem-FI(C)-satisfies-fin-cond}.
 \end{proof}

An element of a preordered set will be said to be {\it isolated}, whenever it is incomparable with any other element of this set. In particular, an object $x\in\ob\cC$ is {\it isolated}, if it is isolated under $\le$.

\begin{lem}\label{lem-tilde-d_x}
 Let $x\in\ob\cC$ be isolated. For any map $d:\mor xx\to\mor xx$ and $\alpha\in FI(\cC)$ define
 \begin{align}\label{eq-tilde-d}
  \tilde d(\alpha)=d(\alpha_{xx})[x,x]+\alpha-\alpha_{xx}[x,x].
 \end{align}
 Then $\tilde d$ is a derivation (respectively Jordan derivation) of $FI(\cC)$ if and only if $d$ is a derivation (respectively Jordan derivation) of $\mor xx$.
\end{lem}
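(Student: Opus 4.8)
The plan is to exploit that an isolated object splits off as a ``corner'' on which $FI(\cC)$ decouples from everything else, so that the defining identities for $\tilde d$ collapse to the corresponding ones for $d$ on $\mor xx$. The one structural input I need is this consequence of isolation: since $x$ is incomparable with every $y\ne x$, the only segment of $\ob\cC$ containing $x$ is $[x,x]$, so for every $\alpha\in FI(\cC)$ the sole $x$-indexed component is $\alpha_{xx}$. Feeding this into the convolution~\eqref{eq-convolution} gives $(\alpha\gamma)_{xx}=\alpha_{xx}\gamma_{xx}$ and, more generally, $(\alpha\gamma\alpha)_{xx}=\alpha_{xx}\gamma_{xx}\alpha_{xx}$; in idempotent terms $e_x\alpha=\alpha e_x=\alpha_{xx}[x,x]$, so $e_x$ is central. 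Thus the content of $\tilde d$ lives in the corner $e_xFI(\cC)e_x\cong\mor xx$, where $\tilde d(\alpha)=d(\alpha_{xx})[x,x]$, and multiplying such a corner element by any series on either side stays in the corner while reading off only the $xx$-component of the other factor, e.g.\ $d(\alpha_{xx})[x,x]\cdot\gamma=d(\alpha_{xx})\gamma_{xx}[x,x]$ and $\alpha\cdot d(\gamma_{xx})[x,x]=\alpha_{xx}d(\gamma_{xx})[x,x]$.

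For \textbf{necessity} I would restrict to the corner. If $\tilde d$ is a derivation (respectively a Jordan derivation) of $FI(\cC)$, then by~\cref{lem-d_e} the map $(\tilde d)_{e_x}$ is a derivation (respectively a Jordan derivation) of $e_xFI(\cC)e_x$. Computing $(\tilde d)_{e_x}(\varphi[x,x])=e_x\tilde d(\varphi[x,x])e_x=d(\varphi)[x,x]$ identifies $(\tilde d)_{e_x}$ with $d$ under the isomorphism $\mor xx\cong e_xFI(\cC)e_x$, $\varphi\leftrightarrow\varphi[x,x]$; hence $d$ is a derivation (respectively a Jordan derivation) of $\mor xx$.

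For \textbf{sufficiency} I would verify the defining identities directly, letting the collapse to the corner do the work. In the derivation case, $\tilde d(\alpha\gamma)=d\bigl((\alpha\gamma)_{xx}\bigr)[x,x]=d(\alpha_{xx}\gamma_{xx})[x,x]$, whereas, by the products displayed above, $\tilde d(\alpha)\gamma+\alpha\tilde d(\gamma)=\bigl(d(\alpha_{xx})\gamma_{xx}+\alpha_{xx}d(\gamma_{xx})\bigr)[x,x]$; since both sides are supported on $[x,x]$, their equality for all $\alpha,\gamma$ amounts exactly to the Leibniz rule for $d$ on $\mor xx$. For the Jordan case the same mechanism applied to~\eqref{eq-d(r^2)} and~\eqref{eq-d(rsr)} turns the left-hand sides into $d(\alpha_{xx}^2)[x,x]$ and $d(\alpha_{xx}\gamma_{xx}\alpha_{xx})[x,x]$, and the right-hand sides into the $[x,x]$-placements of $d(\alpha_{xx})\alpha_{xx}+\alpha_{xx}d(\alpha_{xx})$ and of $d(\alpha_{xx})\gamma_{xx}\alpha_{xx}+\alpha_{xx}d(\gamma_{xx})\alpha_{xx}+\alpha_{xx}\gamma_{xx}d(\alpha_{xx})$; these match for all series precisely when~\eqref{eq-d(r^2)} and~\eqref{eq-d(rsr)} hold for all elements of $\mor xx$, i.e.\ when $d$ is a Jordan derivation.

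The step I expect to demand the most care is justifying the collapse used throughout the sufficiency argument, namely that a product of a corner element with an arbitrary finitary series is again supported on $[x,x]$ and sees only the $xx$-component of the other factor. This is precisely where isolation of $x$ is essential: were $x$ comparable to some $y$, the off-diagonal components $\alpha_{xy}$ or $\alpha_{yx}$ would link the corner to the remainder of $FI(\cC)$ and the identities would no longer separate cleanly. Finally, additivity of $\tilde d$, which I use tacitly, is immediate from the additivity of $d$ together with that of the projection $\alpha\mapsto\alpha_{xx}$.
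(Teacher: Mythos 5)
Your necessity direction is correct, and it works for the formula exactly as printed. The problem is in the sufficiency direction: there you silently replace $\tilde d$ by the map $\alpha\mapsto d(\alpha_{xx})[x,x]$, i.e.\ you drop the summand $\alpha-\alpha_{xx}[x,x]$ from \eqref{eq-tilde-d}. With the printed definition, $\tilde d$ acts as the identity on the set of series vanishing at $(x,x)$, and your key assertion that both sides of the Leibniz identity are supported on $[x,x]$ is false: using your (correct) corner identities one gets
\begin{align*}
 \tilde d(\alpha\gamma)&=d(\alpha_{xx}\gamma_{xx})[x,x]+\alpha\gamma-\alpha_{xx}\gamma_{xx}[x,x],\\
 \tilde d(\alpha)\gamma+\alpha\tilde d(\gamma)&=\bigl(d(\alpha_{xx})\gamma_{xx}+\alpha_{xx}d(\gamma_{xx})\bigr)[x,x]+2\bigl(\alpha\gamma-\alpha_{xx}\gamma_{xx}[x,x]\bigr),
\end{align*}
so the off-corner term occurs once on the left and twice on the right. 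Concretely, take $d=0$ and any object $y\ne x$ with $\id_y\ne 0_{yy}$: since $e_y^2=e_y$ and $(e_y)_{xx}=0_{xx}$, the printed formula gives $\tilde d(e_y^2)=e_y$ while $\tilde d(e_y)e_y+e_y\tilde d(e_y)=2e_y$. Thus \eqref{eq-d(r^2)} already fails, and $\tilde d$ is neither a derivation nor a Jordan derivation although $d$ is a derivation. So, as a proof of the printed statement, your sufficiency argument has a genuine gap --- and in fact no argument can close it.

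That last point is the real content of the comparison: the lemma as printed is false (outside degenerate cases, e.g.\ it fails whenever some $y\ne x$ has $\id_y\ne 0_{yy}$), and the paper's own proof breaks down at exactly this spot. The paper decomposes $FI(\cC)=FI(\{x\})\oplus FI(\cC\setminus\{x\})$ --- a ring direct sum precisely because $x$ is isolated, which is the same structural fact as your corner identities and the centrality of $e_x$ --- and observes that, under $FI(\{x\})\cong\mor xx$, one has $\tilde d=d\oplus\id$. Concluding the equivalence from this tacitly requires $\id$ to be a (Jordan) derivation of $FI(\cC\setminus\{x\})$, which it is not unless that ring has zero multiplication. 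The statement that is true, that your computations actually establish, and that is all the proof of Theorem~\ref{thrm-crit-jder-FI(C)-is-der} uses, is the one for $\tilde d(\alpha)=d(\alpha_{xx})[x,x]$, i.e.\ $\tilde d=d\oplus 0$: your necessity argument applies verbatim, and your sufficiency computation is then exactly right (it is the hands-on, componentwise version of the paper's direct-sum argument). So if you adopt the corrected formula, your proof is complete; but state that correction explicitly rather than making it tacitly, since it is the crux of the matter.
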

\begin{proof}
Clearly, $\tilde d(\alpha)$ is a finitary series, as $\tilde d(\alpha)_{uv}=\alpha_{uv}$ for all $u<v$.

Since $x$ is incomparable with the rest of the objects of $\cC$, then $FI(\cC)=FI(\{x\})\oplus FI(\cC\setminus\{x\})$. Here $\{x\}$ is the full subcategory of $\cC$ with the unique object $x$ and $\cC\setminus\{x\}$ denotes the subcategory obtained from $\cC$ by removing $x$ as well as all the morphisms from $x$ and into $x$. It remains to note that $FI(\{x\})\cong\mor xx$ and $\tilde d=d\oplus\id$ up to this isomorphism.
\end{proof}

Let $M$ be a unital $(R,S)$-bimodule. Following~\cite{Zhang-Yu06} by the {\it triangular ring} $Tri(R,M,S)$ we mean the ring whose additive group is $R\oplus M\oplus S$ and the multiplication is defined by
$$
 (r,m,s)(r',m',s')=(rr',rm'+ms',ss')
$$
(the same construction can also be found in~\cite{Anderson-Winders09}). The next remark is straightforward.
\begin{rem}\label{rem-idealization}
 Let with $x<y$. Then $\mor xy$ is a (unital) $(\mor xx,\mor yy)$-bimodule, such that
 $$
 Tri(\mor xx,\mor xy,\mor yy)\cong(e_x+e_y)FI(\cC)(e_x+e_y).
 $$
\end{rem}
\noindent Indeed, the isomorphism is given by $(\varphi,\psi,\eta)\leftrightarrow\varphi[x,x]+\psi[x,y]+\eta[y,y]$, where $\varphi\in\mor xx$, $\psi\in\mor xy$ and $\eta\in\mor yy$.\\

We say that an $(R,S)$-bimodule is {\it faithful on the left (respectively on the right)}, if it is faithful as a left $R$-module (respectively as a right $S$-module).
\begin{lem}\label{lem-triangular}
 Let $x<y$ be such that $\mor xy$ is faithful on the left and on the right. If $d$ is a Jordan derivation of $FI(\cC)$, then $d_x$ and $d_y$ are derivations of $\mor xx$ and $\mor yy$ respectively.
\end{lem}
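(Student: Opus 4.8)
The plan is to recognize $d_x$ and $d_y$ as diagonal corner restrictions of a single Jordan derivation living on the triangular ring attached to the pair $x<y$, and then to invoke the Zhang--Yu theorem. Concretely, I would put $e=e_x+e_y$; since $x<y$ forces $e_x$ and $e_y$ to be orthogonal, $e$ is an idempotent of $FI(\cC)$ and $eFI(\cC)e=(e_x+e_y)FI(\cC)(e_x+e_y)$. By Lemma~\ref{lem-d_e} the map $d_e$ is a Jordan derivation of $eFI(\cC)e$, and by Remark~\ref{rem-idealization} this corner is isomorphic to $Tri(\mor xx,\mor xy,\mor yy)$. The hypothesis of the lemma is exactly that $\mor xy$ is faithful on the left and on the right, so transporting $d_e$ across the isomorphism and applying \cite[Theorem~2.1]{Zhang-Yu06} yields that $d_e$ is in fact a derivation of $eFI(\cC)e$.

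It then remains to descend to the two diagonal blocks. Since $e_xe=ee_x=e_x$ and $e_ye=ee_y=e_y$, Remark~\ref{rem-(d_f)_e} identifies $d_{e_x}$ with $(d_e)_{e_x}$ and $d_{e_y}$ with $(d_e)_{e_y}$. As $e_x$ and $e_y$ are idempotents of the ring $eFI(\cC)e$ on which $d_e$ is now a genuine derivation, Lemma~\ref{lem-d_e} applied inside $eFI(\cC)e$ shows that $(d_e)_{e_x}$ and $(d_e)_{e_y}$ are derivations of $e_xFI(\cC)e_x$ and $e_yFI(\cC)e_y$. Under the identifications $e_xFI(\cC)e_x\cong\mor xx$ and $e_yFI(\cC)e_y\cong\mor yy$ these maps are precisely $d_x$ and $d_y$, so both are derivations, as desired.

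The step I expect to be the crux is the appeal to \cite[Theorem~2.1]{Zhang-Yu06}. First one must observe that the image of $d_e$ under the isomorphism is a Jordan derivation in the sense of~\eqref{eq-d(rs+sr)}, which is automatic because $d_e$ satisfies the Jacobson--Rickart identities~\eqref{eq-d(r^2)} and~\eqref{eq-d(rsr)}, and these force~\eqref{eq-d(rs+sr)} in every characteristic. The genuinely delicate point is the compatibility of Zhang--Yu's characteristic-$\ne2$ framework with the Jacobson--Rickart setting adopted here; I would settle it by verifying that their argument for the triangular case carries over without change to maps obeying the stronger identities~\eqref{eq-d(r^2)} and~\eqref{eq-d(rsr)}, so that no restriction on $\ch R$ is inherited.
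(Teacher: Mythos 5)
Your proposal is correct and follows the paper's own proof essentially step for step: pass to the corner $(e_x+e_y)FI(\cC)(e_x+e_y)$ via Lemma~\ref{lem-d_e}, identify it with $Tri(\mor xx,\mor xy,\mor yy)$ by Remark~\ref{rem-idealization}, apply \cite[Theorem~2.1]{Zhang-Yu06} using the faithfulness hypothesis, and descend to the diagonal blocks via Remark~\ref{rem-(d_f)_e} and Lemma~\ref{lem-d_e}. Your closing observation about extending Zhang--Yu to the Jacobson--Rickart setting without the $\ch R\ne 2$ restriction is exactly the justification the paper relegates to a footnote.
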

\begin{proof}
By Remark~\ref{rem-idealization} the ring $(e_x+e_y)FI(\cC)(e_x+e_y)$ is triangular and by Lemma~\ref{lem-d_e} the map $d_{e_x+e_y}$ is a Jordan derivation of $(e_x+e_y)FI(\cC)(e_x+e_y)$. Hence, $d_{e_x+e_y}$ is a derivation of $(e_x+e_y)FI(\cC)(e_x+e_y)$ in view of~\cite[Theorem 2.1]{Zhang-Yu06}\footnote{In~\cite{Zhang-Yu06} the authors study Jordan derivations of triangular algebras over commutative rings of characteristics different from $2$. However, their proofs are based on formulas~\cref{eq-d(r^2),eq-d(rsr),eq-d(rs+sr),eq-d(rst+tsr)} and do not use the vector space structure, so the main result is true for Jordan derivations (in the sense of~\cite{Jacobson-Rickart50}) of triangular rings.} Then $d_{e_x}$, being $(d_{e_x+e_y})_{e_x}$ thanks to Remark~\ref{rem-(d_f)_e}, is a derivation of $e_xFI(\cC)e_x$ by the same Lemma~\ref{lem-d_e}. Similarly $d_{e_y}=(d_{e_x+e_y})_{e_y}$ is a derivation of $e_yFI(\cC)e_y$. As $d_x$ and $d_y$ may be identified with $d_{e_x}$ and $d_{e_y}$,
 this completes the proof.
\end{proof}

\begin{thrm}\label{thrm-crit-jder-FI(C)-is-der}
 Assume that for any non-isolated $x\in\ob\cC$ there is $y>x$ (or $y<x$), such that $\mor xy$ (or $\mor yx$) is faithful on the left and on the right. Then each Jordan derivation of $FI(\cC)$ is a derivation if and only if for any isolated $x\in\ob\cC$ each Jordan derivation of $\mor xx$ is a derivation.
\end{thrm}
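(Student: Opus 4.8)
The plan is to reduce everything to the per-object criterion supplied by Proposition~\ref{prop-d-der-iff-d_x-der}, which asserts that a Jordan derivation $d$ of $FI(\cC)$ is a derivation exactly when $d_x$ is a derivation of $\mor xx$ for every $x\in\ob\cC$. Both implications of the theorem will then be obtained by treating isolated and non-isolated objects separately, feeding Lemma~\ref{lem-triangular} into the non-isolated case and Lemma~\ref{lem-tilde-d_x} into the bridge with the rings $\mor xx$ at isolated objects.

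For the sufficiency, I would assume that for each isolated $x$ every Jordan derivation of $\mor xx$ is a derivation, take an arbitrary Jordan derivation $d$ of $FI(\cC)$, fix $x\in\ob\cC$, and show that $d_x$ is a derivation. If $x$ is non-isolated, the hypothesis of the theorem furnishes a comparable $y$ with $\mor xy$ (or $\mor yx$) faithful on both sides, and Lemma~\ref{lem-triangular} yields directly that $d_x$ is a derivation of $\mor xx$; here one only has to note that the lemma covers both alternatives $y>x$ and $y<x$, since it produces derivations at both endpoints of a faithful comparable pair. If $x$ is isolated, then $d_{e_x}$ is a Jordan derivation of $e_xFI(\cC)e_x$ by Lemma~\ref{lem-d_e}, hence $d_x$ is a Jordan derivation of $\mor xx$ under the identification $e_xFI(\cC)e_x\cong\mor xx$ preceding Proposition~\ref{prop-d-der-iff-d_x-der}, and the standing hypothesis upgrades it to a derivation. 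In either case $d_x$ is a derivation, so Proposition~\ref{prop-d-der-iff-d_x-der} closes the sufficiency.

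For the necessity, I would assume every Jordan derivation of $FI(\cC)$ is a derivation, fix an isolated $x\in\ob\cC$, and let $d$ be a Jordan derivation of $\mor xx$. I would then form $\tilde d$ as in~\eqref{eq-tilde-d}: by Lemma~\ref{lem-tilde-d_x} it is a Jordan derivation of $FI(\cC)$, hence a derivation by assumption, and the converse half of that same lemma forces $d$ to be a derivation of $\mor xx$. This is precisely the transfer furnished by the splitting $FI(\cC)=FI(\{x\})\oplus FI(\cC\setminus\{x\})$ used in the proof of Lemma~\ref{lem-tilde-d_x}.

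Since the argument is an assembly of the preceding lemmas rather than a new computation, I do not expect a serious analytic obstacle; the points demanding care are bookkeeping ones. One must invoke Lemma~\ref{lem-triangular} in the correct orientation for the two alternatives $y>x$ and $y<x$, and in the isolated case one must genuinely use that $\mor xx$ appears as a ring direct summand so that Lemmas~\ref{lem-d_e} and~\ref{lem-tilde-d_x} apply without friction. The only mild subtlety is the implicit identification $e_xFI(\cC)e_x\cong\mor xx$ relating $d_{e_x}$ and $d_x$, which has to be applied consistently when passing in both directions between $FI(\cC)$ and the endomorphism rings $\mor xx$.
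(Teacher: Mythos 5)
Your proposal is correct and follows essentially the same route as the paper's own proof: necessity via Lemma~\ref{lem-tilde-d_x} and the transfer $d\leftrightarrow\tilde d$, sufficiency by splitting into isolated and non-isolated objects, handling the latter with Lemma~\ref{lem-triangular} and concluding with Proposition~\ref{prop-d-der-iff-d_x-der}. The only difference is that you make explicit (via Lemma~\ref{lem-d_e} and the identification $e_xFI(\cC)e_x\cong\mor xx$) a step the paper leaves implicit, which is fine.
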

\begin{proof}
 Suppose that each Jordan derivation of $FI(\cC)$ is a derivation. Take an isolated $x\in\ob\cC$ and a Jordan derivation $d$ of $\mor xx$. By Lemma~\ref{lem-tilde-d_x} the map $\tilde d$ defined by~\eqref{eq-tilde-d} is a Jordan derivation of $FI(\cC)$ and hence a derivation of $FI(\cC)$ by the assumption. By the same lemma $d$ is a derivation of $\mor xx$.

 Conversely, assume that, given an isolated $x\in\ob\cC$, each Jordan derivation of $\mor xx$ is a derivation. Let $d$ be a Jordan derivation of $FI(\cC)$ and $x\in\ob\cC$. If $x$ is isolated, then $d_x$ is a derivation of $\mor xx$ by the assumption. Otherwise there exists $y>x$ (or $y<x$), such that $\mor xy$ (or $\mor yx$) is faithful on the left and on the right. Hence $d_x$ is a derivation of $\mor xx$ thanks to Lemma~\ref{lem-triangular}. Consequently, $d$ is a derivation of $FI(\cC)$ in view of Proposition~\ref{prop-d-der-iff-d_x-der}.
\end{proof}


\section{Jordan derivations of \texorpdfstring{$RFM_I(R)$}{RFM(R)}}\label{sec-RFM}

Let $R$ be a unital ring and $I$ a set. Denote by $e_{ij}$, $i,j\in I$, the matrix from $RFM_I(R)$ having a unique nonzero element equal to $1$ at the intersection of the $i$-th row and $j$-th column. Then $\{e_{ii}\}_{i\in I}$ is a set of pairwise orthogonal idempotents of $RFM_I(R)$. Given a matrix $\alpha=\{\alpha_{ij}\}_{i,j\in I}\in RFM_I(R)$, the product $e_{ii}\alpha e_{jj}$ is clearly $\alpha_{ij}e_{ij}$.

We first reformulate Lemma~\ref{lem-ed(r)f} in these notations.
\begin{lem}\label{lem-d'(r)-is-row-finite}
  Let $d$ be a Jordan derivation of $RFM_I(R)$. For any $\alpha\in RFM_I(R)$ and $i\ne j\in I$:
 \begin{align}\label{eq-d(alpha)_ij}
  d(\alpha)_{ij}=d(\alpha_{ij}e_{ij})_{ij}-(d(e_{ii})\alpha)_{ij}-(\alpha d(e_{jj}))_{ij}+d(\alpha_{ji}e_{ji})_{ij}.
 \end{align}
 Moreover,
 \begin{align}\label{eq-d(alpha)_ii}
  d(\alpha)_{ii}=d(\alpha_{ii}e_{ii})_{ii}-(d(e_{ii})\alpha)_{ii}-(\alpha d(e_{ii}))_{ii}.
 \end{align}
\end{lem}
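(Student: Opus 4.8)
The plan is to specialize Lemma~\ref{lem-ed(r)f} to the ring $RFM_I(R)$, using the matrix units $\{e_{ii}\}_{i\in I}$ as the orthogonal idempotents. The only device needed is the identity $e_{ii}\beta e_{jj}=\beta_{ij}e_{ij}$ recorded just before the statement: extracting the coefficient of $e_{ij}$ from a matrix equation of the form $e_{ii}\gamma e_{jj}=\dots$ is the same as reading off the $(i,j)$-entry. Thus the whole proof reduces to a change of notation, and I do not expect any genuine obstacle beyond careful bookkeeping of which products turn into which entries.

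For the case $i\ne j$, first note that $e_{ii}$ and $e_{jj}$ commute with product $e_{ii}e_{jj}=0$, so they are orthogonal. I would apply~\eqref{eq-ed(r)f} with $e=e_{ii}$, $f=e_{jj}$ and $r=\alpha$, obtaining
\begin{align*}
 e_{ii}d(\alpha)e_{jj}=e_{ii}d(e_{ii}\alpha e_{jj})e_{jj}-e_{ii}d(e_{ii})\alpha e_{jj}-e_{ii}\alpha d(e_{jj})e_{jj}+e_{ii}d(e_{jj}\alpha e_{ii})e_{jj}.
\end{align*}
Now I would rewrite each term using $e_{ii}\beta e_{jj}=\beta_{ij}e_{ij}$. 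The left-hand side has $e_{ij}$-coefficient $d(\alpha)_{ij}$. On the right, $e_{ii}\alpha e_{jj}=\alpha_{ij}e_{ij}$ turns the first term into $e_{ii}d(\alpha_{ij}e_{ij})e_{jj}$, with coefficient $d(\alpha_{ij}e_{ij})_{ij}$, while $e_{jj}\alpha e_{ii}=\alpha_{ji}e_{ji}$ turns the last term into $e_{ii}d(\alpha_{ji}e_{ji})e_{jj}$, with coefficient $d(\alpha_{ji}e_{ji})_{ij}$. The two middle terms are already of the form $e_{ii}(d(e_{ii})\alpha)e_{jj}$ and $e_{ii}(\alpha d(e_{jj}))e_{jj}$, with coefficients $(d(e_{ii})\alpha)_{ij}$ and $(\alpha d(e_{jj}))_{ij}$. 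Collecting these coefficients yields~\eqref{eq-d(alpha)_ij}.

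For the diagonal case $i=j$, I would instead invoke~\eqref{eq-ed(r)e} with $e=e_{ii}$ and $r=\alpha$, giving $e_{ii}d(\alpha)e_{ii}=e_{ii}d(e_{ii}\alpha e_{ii})e_{ii}-e_{ii}d(e_{ii})\alpha e_{ii}-e_{ii}\alpha d(e_{ii})e_{ii}$. Using $e_{ii}\alpha e_{ii}=\alpha_{ii}e_{ii}$ and reading off the $e_{ii}$-coefficients exactly as above produces~\eqref{eq-d(alpha)_ii}, which completes the reformulation.
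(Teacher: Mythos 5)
Your proof is correct and matches the paper's intent exactly: the paper offers no separate argument, merely noting that the lemma is Lemma~\ref{lem-ed(r)f} ``reformulated in these notations,'' i.e.\ \eqref{eq-ed(r)f} with $e=e_{ii}$, $f=e_{jj}$ for $i\ne j$ and \eqref{eq-ed(r)e} with $e=e_{ii}$, read entrywise via $e_{ii}\beta e_{jj}=\beta_{ij}e_{ij}$. Your bookkeeping of the four off-diagonal terms and the three diagonal terms is accurate, so nothing is missing.
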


\begin{lem}\label{lem-RFM_I-times-I(R)-satisfies-fin-cond}
 The ring $RFM_I(R)$ satisfies\cref{eq-E',eq-ersf=sum-ergsf} with $E=\{e_{ii}\}_{i\in I}$.
\end{lem}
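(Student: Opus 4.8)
The plan is to verify the two finitariness conditions \eqref{eq-E'} and \eqref{eq-ersf=sum-ergsf} directly for $E=\{e_{ii}\}_{i\in I}$, exploiting the fact that $e_{ii}\alpha e_{jj}=\alpha_{ij}e_{ij}$ singles out a single matrix entry. The whole proof rests on the elementary observation that matrix multiplication localizes indices: for $\alpha,\beta\in RFM_I(R)$ one has $e_{ii}\alpha e_{kk}\beta e_{jj}=\alpha_{ik}\beta_{kj}e_{ij}$, so the ``intermediate idempotent'' $e_{kk}$ picks out exactly the $(i,k)$-entry of $\alpha$ and the $(k,j)$-entry of $\beta$.

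First I would establish \eqref{eq-E'}. Fix $\alpha\in RFM_I(R)$ and $e_{ii},e_{jj}\in E$. By definition $E(e_{ii},\alpha,e_{jj})$ consists of those $e_{kk}$ with $e_{ii}\alpha e_{kk}\ne 0$, i.e. $\alpha_{ik}\ne 0$. But $\alpha$ is row-finite, so the $i$-th row has only finitely many nonzero entries; hence the set of admissible $k$ is finite and \eqref{eq-E'} holds. (The condition ``$\exists s\in R:\ e_{kk}se_{jj}\ne 0$'' is automatically satisfiable whenever $j$ ranges over $I$, since $e_{kj}\ne 0$, so row-finiteness alone controls the cardinality.)

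Next I would verify \eqref{eq-ersf=sum-ergsf}. Take $\alpha,\beta\in RFM_I(R)$. Using $e_{ii}\alpha e_{kk}\beta e_{jj}=\alpha_{ik}\beta_{kj}e_{ij}$, the index set $E(e_{ii},\alpha,\beta,e_{jj})$ corresponds precisely to $\{k\in I\mid \alpha_{ik}\beta_{kj}\ne 0\}$, and by definition of the matrix product $(\alpha\beta)_{ij}=\sum_{k\in I}\alpha_{ik}\beta_{kj}$. This sum is finite (again by row-finiteness of $\alpha$), and the zero terms may be dropped, so
\begin{align*}
 e_{ii}\alpha\beta e_{jj}=(\alpha\beta)_{ij}e_{ij}=\sum_{k:\,\alpha_{ik}\beta_{kj}\ne 0}\alpha_{ik}\beta_{kj}e_{ij}=\sum_{e_{kk}\in E(e_{ii},\alpha,\beta,e_{jj})}e_{ii}\alpha e_{kk}\beta e_{jj},
\end{align*}
which is exactly \eqref{eq-ersf=sum-ergsf}.

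I do not expect any serious obstacle here: both conditions follow from the definition of the matrix product together with the single hypothesis of row-finiteness, which is precisely what guarantees the relevant index sets are finite and that the defining sum for $(\alpha\beta)_{ij}$ has only finitely many nonzero summands. The only point requiring a little care is that row-finiteness of $\alpha$ (not column-finiteness of $\beta$) is the right finiteness assumption: it bounds the $i$-th row of $\alpha$, which is what appears in both $E(e_{ii},\alpha,e_{jj})$ and in the convolution sum. This mirrors the argument already given for $FI(\cC)$ in Lemma~\ref{lem-FI(C)-satisfies-fin-cond}, with the finitary-series condition there replaced by row-finiteness here.
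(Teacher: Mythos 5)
Your proposal is correct and follows essentially the same route as the paper: identify $e_{ii}\alpha e_{kk}=\alpha_{ik}e_{ik}$ and $e_{ii}\alpha e_{kk}\beta e_{jj}=\alpha_{ik}\beta_{kj}e_{ij}$, bound $E(e_{ii},\alpha,e_{jj})$ by the nonzero entries of the $i$-th row of $\alpha$ (finite by row-finiteness), and observe that the matrix-product entry $(\alpha\beta)_{ij}$ is exactly the sum over the finitely many $k$ with $\alpha_{ik}\beta_{kj}\ne 0$. The only cosmetic difference is that the paper just uses the inclusion of $E(e_{ii},\alpha,e_{jj})$ into $\{k\mid\alpha_{ik}\ne 0\}$ rather than your equality via the unitality remark, but the finiteness conclusion is identical either way.
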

\begin{proof}
Given $\alpha\in RFM_I(R)$ and $i,j\in I$, one clearly has
$$
 \{k\in I\mid e_{kk}\in E(e_{ii},\alpha,e_{jj})\}
 \subseteq\{k\in I\mid e_{ii}\alpha e_{kk}\ne 0\}
 =\{k\in I\mid \alpha_{ik}\ne 0\},
$$
which is finite due to the row-finiteness condition. This proves~\eqref{eq-E'}. Moreover, for any $\beta\in RFM_I(R)$:
\begin{align*}
 I'&=\{k\in I\mid e_{kk}\in E(e_{ii},\alpha,\beta,e_{jj})\}=\{k\in I\mid \alpha_{ik}\beta_{kj}\ne 0\},
\end{align*}
so
$$
 e_{ii}\alpha\beta e_{jj}=(\alpha\beta)_{ij}e_{ij}=\left(\sum_{k\in I'}\alpha_{ik}\beta_{kj}\right)e_{ij}=\sum_{k\in I'}e_{ii}\alpha e_{kk}\beta e_{jj},
$$
giving~\eqref{eq-ersf=sum-ergsf}.
\end{proof}

For each $i\in I$ the ring $e_{ii}RFM_I(R)e_{ii}=Re_{ii}$ is obviously isomorphic to $R$. If $d$ is a map from $RFM_I(R)$ to itself and $i\in I$, then $d_i$ denotes the induced map $R\to R$, namely, $d_i(r)=d(re_{ii})_{ii}$.

Let us write~\eqref{eq-d'} for $e=e_{ii}$, $f=e_{jj}$ and $r=\alpha\in RFM_I(R)$:
\begin{align}\label{eq-d'(alpha)_ij}
   d'(\alpha)_{ij}=d(\alpha_{ij}e_{ij})_{ij}-(d(e_{ii})\alpha)_{ij}-(\alpha d(e_{jj}))_{ij}.
\end{align}
This defines a (unique) matrix $d'(\alpha)$ over $R$ whose entries are indexed by the pairs of elements of $I$. It is not clear, however, whether $d'(\alpha)$ is row-finite.

\begin{lem}\label{lem-d'(alpha)-is-row-finite}
 Let $d$ be a Jordan derivation of $RFM_I(R)$ and $\alpha\in RFM_I(R)$. Then $d'(\alpha)\in RFM_I(R)$.
\end{lem}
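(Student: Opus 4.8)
The plan is to fix a row index $i\in I$ and prove that the set $\{j\in I\mid d'(\alpha)_{ij}\ne 0\}$ is finite; this is precisely what row-finiteness of $d'(\alpha)$ requires. I would treat the three summands on the right-hand side of~\eqref{eq-d'(alpha)_ij} one at a time.

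The first two summands pose no difficulty. Since $\alpha$ is row-finite, for fixed $i$ only finitely many $j$ satisfy $\alpha_{ij}\ne 0$; for all other $j$ one has $\alpha_{ij}e_{ij}=0$, whence $d(\alpha_{ij}e_{ij})=d(0)=0$ and the first summand vanishes. For the second summand I would use that $d(e_{ii})$ is a single fixed element of $RFM_I(R)$ and that $RFM_I(R)$ is a ring, so $d(e_{ii})\alpha$ is again row-finite; hence its $(i,j)$-entry is nonzero for only finitely many $j$.

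The main obstacle is the third summand $(\alpha d(e_{jj}))_{ij}$, where the matrix $d(e_{jj})$ changes with $j$, so one cannot simply quote row-finiteness of a fixed product. Expanding, $(\alpha d(e_{jj}))_{ij}=\sum_{k}\alpha_{ik}(d(e_{jj}))_{kj}$, and row-finiteness of $\alpha$ restricts the sum to the finite set $K_i=\{k\mid\alpha_{ik}\ne 0\}$. The key step I would use is Lemma~\ref{lem-ed(e)f+ed(f)f} to replace the varying matrix $d(e_{jj})$ by fixed ones: for $k\ne j$ the idempotents $e_{kk}$ and $e_{jj}$ are orthogonal, so $e_{kk}d(e_{kk})e_{jj}+e_{kk}d(e_{jj})e_{jj}=0$, that is $(d(e_{jj}))_{kj}=-(d(e_{kk}))_{kj}$. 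Consequently, for each fixed $k\in K_i$ the off-diagonal contribution is controlled by the $k$-th row of the fixed, hence row-finite, matrix $d(e_{kk})$, which has only finitely many nonzero entries, while the diagonal term $k=j$ can occur only for $j\in K_i$. Taking the union over the finitely many $k\in K_i$ leaves only finitely many $j$ with $(\alpha d(e_{jj}))_{ij}\ne 0$.

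Putting the three estimates together, for each fixed $i$ only finitely many $j$ give $d'(\alpha)_{ij}\ne 0$, so $d'(\alpha)\in RFM_I(R)$. I expect the third summand to be the only genuinely delicate point; the trick of trading $d(e_{jj})$ for $d(e_{kk})$ via Lemma~\ref{lem-ed(e)f+ed(f)f} is what makes the row-finiteness of the individual $d(e_{kk})$ available.
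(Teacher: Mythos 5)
Your proof is correct and takes essentially the same approach as the paper: the same three-way split of the right-hand side of~\eqref{eq-d'(alpha)_ij}, with the first term killed by row-finiteness of $\alpha$, the second by row-finiteness of the fixed matrix $d(e_{ii})\alpha$, and the third by trading $d(e_{jj})_{kj}$ for $-d(e_{kk})_{kj}$ via Lemma~\ref{lem-ed(e)f+ed(f)f}. The only difference is presentational: the paper argues the third term by contradiction with a pigeonhole step, whereas you give the direct finite-union bound.
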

\begin{proof}
 Suppose on the contrary that there are $i\in I$ and an infinite $J\subseteq I$, such that $d'(\alpha)_{ij}\ne 0$ for $j\in J$. According to~\eqref{eq-d'(alpha)_ij} at least one of the sets
 $$
  A=\{d(\alpha_{ij}e_{ij})_{ij}\}_{j\in J},\ \ B=\{(d(e_{ii})\alpha)_{ij}\}_{j\in J},\ \ C=\{(\alpha d(e_{jj}))_{ij}\}_{j\in J}
 $$
should have infinitely many nonzero elements.

Since $\alpha$ is row-finite, then there is a finite $J'\subseteq J$ with $\alpha_{ij'}=0$ for $j'\in J\setminus J'$. Therefore, $d(\alpha_{ij'}e_{ij'})=0$ for such $j'$, so the set of nonzero elements of $A$ is finite.

Furthermore, the elements of $B$ belong to the $i$-th row of the fixed (in the sense that it does not depend on $j$) row-finite matrix $d(e_{ii})\alpha$. Hence, $B$ has only finitely many nonzero elements.

Suppose that the set of nonzero elements of $C$ is infinite. For simplicity assume that all the elements of $C$ are nonzero. As $(\alpha d(e_{jj}))_{ij}=\sum_{k\in I}\alpha_{ik}d(e_{jj})_{kj}$, for each $j\in J$ there is $k_j\in I$, such that both $\alpha_{ik_j}$ and $d(e_{jj})_{k_jj}$ are nonzero. But $\alpha$ is row-finite, so there is only a finite number of different indices $k_j$. Hence, one can find $k'$ among them, such that $k'=k_j$ for infinitely many $j\in J$. Consequently, $d(e_{jj})_{k'j}\ne 0$ for infinitely many $j\in J$. However, by Lemma~\ref{lem-ed(e)f+ed(f)f} with $e=e_{k'k'}$ and $f=e_{jj}$ one has $0\ne d(e_{jj})_{k'j}=-d(e_{k'k'})_{k'j}$. We get a contradiction with the fact that $d(e_{k'k'})$ is row-finite.
\end{proof}

\begin{cor}\label{cor-d'-is-der}
 Let $d$ be a Jordan derivation of $RFM_I(R)$, such that for each $i\in I$ the map $d_i$ is a derivation of $R$. Then $d'$ is a derivation of $RFM_I(R)$.
\end{cor}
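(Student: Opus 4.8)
The plan is to deduce this corollary directly from \cref{lem-d'-der}, applied to the ring $RFM_I(R)$ with the set of pairwise orthogonal idempotents $E=\{e_{ii}\}_{i\in I}$. Thus the entire argument reduces to checking that the three hypotheses of \cref{lem-d'-der} are satisfied in the present situation.

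First I would record that $RFM_I(R)$ satisfies the finiteness conditions \eqref{eq-E'} and \eqref{eq-ersf=sum-ergsf} for this choice of $E$, which is exactly the content of \cref{lem-RFM_I-times-I(R)-satisfies-fin-cond}. Next, \cref{lem-d'-der} requires that $d_e$ be a derivation of $eRe$ for every $e\in E$. Since $d_{e_{ii}}$ is identified with $d_i$ under the isomorphism $e_{ii}RFM_I(R)e_{ii}=Re_{ii}\cong R$, and $d_i$ is a derivation of $R$ by assumption, this condition holds for each $e_{ii}\in E$.

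The remaining hypothesis is the existence and uniqueness, for each $\alpha\in RFM_I(R)$, of an element $d'(\alpha)\in RFM_I(R)$ satisfying \eqref{eq-d'} for all $e,f\in E$. Here I would observe that specialising \eqref{eq-d'} to $e=e_{ii}$, $f=e_{jj}$ and extracting the coefficient of $e_{ij}$ yields precisely \eqref{eq-d'(alpha)_ij}, which prescribes every entry of the candidate matrix $d'(\alpha)$. Uniqueness is then immediate, because an element $\gamma$ of $RFM_I(R)$ is completely determined by the collection $\{e_{ii}\gamma e_{jj}\}_{i,j\in I}=\{\gamma_{ij}e_{ij}\}_{i,j\in I}$ of its entries. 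Existence, namely the fact that the matrix defined entrywise by \eqref{eq-d'(alpha)_ij} actually lies in $RFM_I(R)$ (i.e. is row-finite), is the one genuinely nontrivial point, and it is supplied by \cref{lem-d'(alpha)-is-row-finite}.

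With all three hypotheses verified, \cref{lem-d'-der} yields that $d'$ is a derivation of $RFM_I(R)$, which completes the proof. I expect the main obstacle to be entirely isolated in \cref{lem-d'(alpha)-is-row-finite}: the formula \eqref{eq-d'(alpha)_ij} by itself defines only an abstract $I\times I$-matrix over $R$, and without row-finiteness it would not even be an element of $RFM_I(R)$, so establishing that $d'(\alpha)$ is row-finite is precisely what makes the application of \cref{lem-d'-der} legitimate.
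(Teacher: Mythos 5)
Your proposal is correct and follows exactly the paper's own route: the paper's proof is a one-line citation of \cref{lem-d'-der,lem-RFM_I-times-I(R)-satisfies-fin-cond,lem-d'(alpha)-is-row-finite}, and you have simply spelled out how each of these supplies a hypothesis of \cref{lem-d'-der}. Your identification of row-finiteness (\cref{lem-d'(alpha)-is-row-finite}) as the only nontrivial point is also exactly where the paper places the real work.
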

\noindent This immediately follows from \cref{lem-d'-der,lem-RFM_I-times-I(R)-satisfies-fin-cond,lem-d'(alpha)-is-row-finite}.

\begin{lem}\label{lem-d-der-iff-d_i-der}
 Let $d$ be a Jordan derivation of $RFM_I(R)$. Then $d$ is a derivation of $RFM_I(R)$ if and only if for all $i\in I$ the map $d_i$ is a derivation of $R$.
\end{lem}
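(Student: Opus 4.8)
The plan is to prove the two implications separately, exploiting the machinery already assembled. The necessity is immediate: if $d$ is a derivation of $RFM_I(R)$, then by \cref{lem-d_e} the restriction $d_{e_{ii}}$ is a derivation of $e_{ii}RFM_I(R)e_{ii}=Re_{ii}\cong R$, and under this identification $d_{e_{ii}}$ corresponds to $d_i$; hence each $d_i$ is a derivation of $R$.

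For the sufficiency I would assume that each $d_i$ is a derivation of $R$ and show that $d=d'$, where $d'$ is the map defined entrywise by~\eqref{eq-d'(alpha)_ij}. By \cref{cor-d'-is-der} the map $d'$ is already a derivation of $RFM_I(R)$ (this is where \cref{lem-d'(alpha)-is-row-finite} does the essential work, guaranteeing $d'(\alpha)$ is genuinely row-finite, so that $d'$ is well defined as a map into $RFM_I(R)$). Thus it suffices to check that $d'$ and $d$ agree entrywise. Comparing the defining formula~\eqref{eq-d'(alpha)_ij} for $d'(\alpha)_{ij}$ with the expression for $d(\alpha)_{ij}$ coming from \cref{lem-d'(r)-is-row-finite}, the two coincide on the diagonal immediately by~\eqref{eq-d(alpha)_ii}, while off the diagonal~\eqref{eq-d(alpha)_ij} carries the extra term $d(\alpha_{ji}e_{ji})_{ij}$. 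So the crux is to show this extra term vanishes.

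The key step, and the one I expect to be the main obstacle, is verifying that $d(\alpha_{ji}e_{ji})_{ij}=0$ for $i\ne j$. The natural tool is \cref{cor-ed(rs)f}: since each $d_{e_{kk}}$ is a derivation of $e_{kk}RFM_I(R)e_{kk}$ by hypothesis (via the identification of $d_i$ with $d_{e_{ii}}$), that corollary tells us $d$ satisfies~\eqref{eq-ed(rs)f} for every triple of distinct diagonal idempotents. Writing $\alpha_{ji}e_{ji}=(\alpha_{ji}e_{jj})\cdot e_{ji}$ or factoring it as a product passing through some auxiliary index, I would apply~\eqref{eq-ed(rs)f} with the outer idempotents both equal to $e_{jj}$ and $e_{ii}$ so that the surrounding factors $e_{ii}(\cdots)e_{jj}$ force the relevant pieces to zero; concretely $e_{ii}d(\beta)e_{jj}$ with $\beta\in e_{jj}RFM_I(R)e_{ii}$ should split, via~\eqref{eq-ed(rs)f}, into terms each annihilated by the mismatch of row/column indices. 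The delicate point is choosing the factorization and the third idempotent so that \cref{lem-ed(rs)f} applies (its hypothesis requires at least two of the three idempotents to differ) and every resulting summand is killed by the orthogonality relations $e_{ii}e_{kk}=\delta_{ik}e_{ii}$. Once $d(\alpha_{ji}e_{ji})_{ij}=0$ is established, the formulas for $d(\alpha)_{ij}$ and $d'(\alpha)_{ij}$ agree for all $i,j$, whence $d=d'$ is a derivation, completing the proof.
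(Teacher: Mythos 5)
Your overall frame coincides with the paper's: necessity from Lemma~\ref{lem-d_e}; sufficiency by proving $d=d'$, where Corollary~\ref{cor-d'-is-der} (resting on Lemma~\ref{lem-d'(alpha)-is-row-finite}) makes $d'$ a derivation; and you correctly isolate the crux, namely $d(\alpha_{ji}e_{ji})_{ij}=0$ for $i\ne j$. But the tool you propose for the crux cannot deliver it. Every admissible instance of~\eqref{eq-ed(rs)f} reads $ed(rs)f=ed(r)s+rd(s)f$ with $r\in eRg$, $s\in gRf$, hence $rs\in eRf$: it only controls corners $ed(\gamma)f$ for elements $\gamma$ lying in the \emph{same} Peirce component $eRf$ cut out by the outer idempotents. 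The term you must kill is of the transposed type, $e_{ii}d(\beta)e_{jj}$ with $\beta=\alpha_{ji}e_{ji}\in e_{jj}RFM_I(R)e_{ii}$; since any product $rs$ with $r\in e_{ii}RFM_I(R)g$ and $s\in gRFM_I(R)e_{jj}$ lies in $e_{ii}RFM_I(R)e_{jj}$, no factorization of $\beta$ ever puts you in a position to apply Lemma~\ref{lem-ed(rs)f} or Corollary~\ref{cor-ed(rs)f} to the quantity you need.

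This is a genuine obstruction, not a matter of cleverer bookkeeping. Let $S$ be the generalized matrix ring of elements $\left(\begin{smallmatrix}a&m\\n&b\end{smallmatrix}\right)$ with $a,b,m,n\in R$, $R$ commutative, multiplied as usual except that both pairings (off-diagonal times off-diagonal) are declared zero, and define $d\left(\begin{smallmatrix}a&m\\n&b\end{smallmatrix}\right)=\left(\begin{smallmatrix}0&n\\0&0\end{smallmatrix}\right)$. A direct check (all offending products vanish) shows $d$ satisfies \eqref{eq-d(r^2)} and \eqref{eq-d(rsr)}, its restrictions to the corners of the diagonal idempotents $e=\left(\begin{smallmatrix}1&0\\0&0\end{smallmatrix}\right)$, $f=\left(\begin{smallmatrix}0&0\\0&1\end{smallmatrix}\right)$ are zero, hence derivations, and all identities of Lemma~\ref{lem-ed(rs)f} hold automatically; yet $ed(\beta)f\ne 0$ for $0\ne\beta\in fSe$, and $d$ is not a derivation. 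So everything you permit yourself (the Jordan axioms, orthogonality of the $e_{ii}$, Corollary~\ref{cor-ed(rs)f}) is consistent with the failure of the desired vanishing; in $RFM_I(R)$ that vanishing genuinely requires the matrix-unit relations $e_{ij}e_{ji}=e_{ii}$, $e_{ji}e_{ij}=e_{jj}$. The paper supplies this through a step absent from your sketch: using \eqref{eq-(d-d')(alpha)_ij} and \eqref{eq-(d-d')(alpha)_ii} it shows that the subring $A_{ij}$ spanned by $e_{ii},e_{ij},e_{ji},e_{jj}$, which is isomorphic to $M_2(R)$, is invariant under $d-d'$ (under $d-d'$, not under $d$ --- this is precisely why the difference is formed first), invokes the Jacobson--Rickart theorem to conclude that $(d-d')|_{A_{ij}}$ is a derivation, and only then applies Remark~\ref{rem-ed(fre)f=0} to obtain $(d-d')(\alpha)_{ij}=d(\alpha_{ji}e_{ji})_{ij}=0$. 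Some input of that strength (a theorem about Jordan derivations of full $2\times 2$ matrix rings, or an equivalent computation exploiting the products $e_{ij}e_{ji}$) is unavoidable, and your plan has no substitute for it.
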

\begin{proof}
 The necessity is Lemma~\ref{lem-d_e}.

 For the sufficiency we use the idea of the proof of~\cite[Theorem~3.3]{XiaoJDerIAlg}. When $|I|=1$, the assertion is trivial. Suppose that $|I|>1$ and $d_i$ is a derivation of $R$ for each $i\in I$. By Corollary~\ref{cor-d'-is-der} the map $d'$ is a derivation of $RFM_I(R)$. Therefore, $d-d'$ is a Jordan derivation of $RFM_I(R)$. Note from\cref{eq-d(alpha)_ij,eq-d(alpha)_ii,eq-d'(alpha)_ij} that
 \begin{align}
  (d-d')(\alpha)_{ij}&=d(\alpha_{ji}e_{ji})_{ij},\ \ i\ne j,\label{eq-(d-d')(alpha)_ij}\\
  (d-d')(\alpha)_{ii}&=0.\label{eq-(d-d')(alpha)_ii}
 \end{align}

 Fix a pair of distinct $i,j\in I$ (such a pair exists, as $|I|>1$). Denote by $A_{ij}$ the $R$-submodule of $RFM_I(R)$ generated by $e_{ii},e_{ij},e_{ji},e_{jj}$. It is clearly a subring of $RFM_I(R)$ isomorphic to the ring of $2\times 2$ matrices over $R$. Let $\alpha\in A_{ij}$ and $k\ne l$ with $\{k,l\}\ne\{i,j\}$. One sees from~\eqref{eq-(d-d')(alpha)_ij} that $(d-d')(\alpha)_{kl}=0$, as $\alpha_{lk}=0$. Hence, $(d-d')(\alpha)\in A_{ij}$ and thus $A_{ij}$ is invariant under $d-d'$. It follows that $(d-d')|_{A_{ij}}$ is a Jordan derivation of $A_{ij}$ and hence a derivation of $A_{ij}$ in view of Theorems 8 and 22 from~\cite{Jacobson-Rickart50}.

 For any $\alpha\in RFM_I(R)$ set
 \begin{align}\label{eq-tilde-alpha}
  \tilde\alpha=\alpha_{ii}e_{ii}+\alpha_{ij}e_{ij}+\alpha_{ji}e_{ji}+\alpha_{jj}e_{jj}\in A_{ij}.
 \end{align}
 Since $(d-d')|_{A_{ij}}$ is a derivation of $A_{ij}$, by Remark~\ref{rem-ed(fre)f=0} we have $(d-d')(\tilde\alpha_{ji}e_{ji})_{ij}=0$. But
 $$
  (d-d')(\tilde\alpha_{ji}e_{ji})_{ij}=d(\tilde\alpha_{ji}e_{ji})_{ij}=d(\alpha_{ji}e_{ji})_{ij}=(d-d')(\alpha)_{ij}
 $$
 thanks to~\eqref{eq-(d-d')(alpha)_ij} and~\eqref{eq-tilde-alpha}. So, $(d-d')(\alpha)_{ij}=0$. As $i$ and $j$ were arbitrary (distinct) elements of $I$, together with~\eqref{eq-(d-d')(alpha)_ii} this means that $(d-d')(\alpha)=0$. Thus, $d=d'$ and hence it is a derivation.
\end{proof}

\begin{lem}\label{lem-d_i-is-der}
 Let $d$ be a Jordan derivation of $RFM_I(R)$. If $|I|>1$ and $i\in I$, then $d_i$ is a derivation of $R$.
\end{lem}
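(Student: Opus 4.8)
The plan is to reduce to the case of $2\times 2$ matrices over $R$ and invoke the classical Jacobson--Rickart theorem, exactly as \cref{lem-triangular} reduced to the Zhang--Yu theorem for triangular rings. Since $|I|>1$, I can fix some $j\in I$ with $j\ne i$. The first observation is that the corner ring $(e_{ii}+e_{jj})RFM_I(R)(e_{ii}+e_{jj})$ consists precisely of the matrices supported on the four positions $(i,i),(i,j),(j,i),(j,j)$; this is the ring $A_{ij}$ from the proof of \cref{lem-d-der-iff-d_i-der}, which is isomorphic to the ring $M_2(R)$ of $2\times 2$ matrices over $R$. Note that $e_{ii}+e_{jj}$ is an idempotent, being a sum of the orthogonal idempotents $e_{ii}$ and $e_{jj}$.

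By \cref{lem-d_e} the map $d_{e_{ii}+e_{jj}}$ is a Jordan derivation of $(e_{ii}+e_{jj})RFM_I(R)(e_{ii}+e_{jj})\cong M_2(R)$. Here the unitality of $R$ (assumed throughout \cref{sec-RFM}) places us in the setting of \cite{Jacobson-Rickart50}, so by Theorems 8 and 22 from that paper every Jordan derivation of $M_2(R)$ is a derivation; hence $d_{e_{ii}+e_{jj}}$ is in fact a derivation of $(e_{ii}+e_{jj})RFM_I(R)(e_{ii}+e_{jj})$.

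It remains to restrict back to the $(i,i)$-corner. Since $e_{ii}(e_{ii}+e_{jj})=(e_{ii}+e_{jj})e_{ii}=e_{ii}$, \cref{rem-(d_f)_e} gives $d_{e_{ii}}=(d_{e_{ii}+e_{jj}})_{e_{ii}}$, and one checks directly that $e_{ii}\bigl((e_{ii}+e_{jj})RFM_I(R)(e_{ii}+e_{jj})\bigr)e_{ii}=e_{ii}RFM_I(R)e_{ii}$. Applying \cref{lem-d_e} once more, now to the derivation $d_{e_{ii}+e_{jj}}$, I conclude that $d_{e_{ii}}=(d_{e_{ii}+e_{jj}})_{e_{ii}}$ is a derivation of $e_{ii}RFM_I(R)e_{ii}$. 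Under the identification $e_{ii}RFM_I(R)e_{ii}=Re_{ii}\cong R$ this says exactly that $d_i$ is a derivation of $R$.

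Because every step is a direct application of the machinery already established, I do not anticipate a real obstacle; the only point worth stressing is that the reduction genuinely requires a second index $j\ne i$, so that the corner ring is $M_2(R)$ rather than $R$ itself. This is precisely where the hypothesis $|I|>1$ enters, and it is unavoidable: when $|I|=1$ the ring $RFM_I(R)$ is just $R$, and a Jordan derivation of $R$ need not be a derivation.
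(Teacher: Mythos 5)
Your proof is correct and follows essentially the same route as the paper's: both pass to the corner ring $A_{ij}=(e_{ii}+e_{jj})RFM_I(R)(e_{ii}+e_{jj})\cong M_2(R)$ via \cref{lem-d_e}, invoke Theorems 8 and 22 of \cite{Jacobson-Rickart50} to upgrade the Jordan derivation $d_{e_{ii}+e_{jj}}$ to a derivation, and then restrict back to the $(i,i)$-corner using \cref{rem-(d_f)_e} together with a second application of \cref{lem-d_e}. There is nothing to add or correct.
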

\begin{proof}
 As $|I|>1$, there exists $j\ne i$ in $I$. Let $A_{ij}$ denote the subring of $RFM_I(R)$ from the proof of Lemma~\ref{lem-d-der-iff-d_i-der}. Note that $A_{ij}=eRFM_I(R)e$, where $e$ is the idempotent $e_{ii}+e_{jj}$. By Lemma~\ref{lem-d_e} the map $d_e$ is a Jordan derivation of $A_{ij}$. Since $A_{ij}$ is isomorphic to the ring of $2\times 2$ matrices over $R$, then $d_e$ is a derivation of $A_{ij}$ according to Theorems 8 and 22 from~\cite{Jacobson-Rickart50}.

 Now since $e_{ii}e=ee_{ii}=e_{ii}$, thanks to Remark~\ref{rem-(d_f)_e} we have $e_{ii}RFM_I(R)e_{ii}\subseteq A_{ij}$ and $d_{e_{ii}}=(d_e)_{e_{ii}}$, so $d_{e_{ii}}$ is a derivation of $e_{ii}RFM_I(R)e_{ii}$ by Lemma~\ref{lem-d_e}. The latter means that $d_i$ is a derivation of $R$.
\end{proof}

\begin{thrm}\label{thrm-jder-of-RFM_I-for-|I|>1}
 Let $R$ be a ring and $I$ a set. If $|I|>1$, then each Jordan derivation of $RFM_I(R)$ is a derivation.
\end{thrm}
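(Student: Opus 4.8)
The plan is to read the theorem off the two preceding lemmas, so that essentially all of the work has already been done. Let $d$ be a Jordan derivation of $RFM_I(R)$ and suppose $|I|>1$. The first step is to invoke Lemma~\ref{lem-d_i-is-der}, which asserts precisely that, under the hypothesis $|I|>1$, the induced map $d_i$ is a derivation of $R$ for every $i\in I$. The role of $|I|>1$ is concentrated here: the lemma needs a partner index $j\ne i$ in order to pass to the $2\times 2$ corner $A_{ij}=(e_{ii}+e_{jj})RFM_I(R)(e_{ii}+e_{jj})$, which is isomorphic to the ring of $2\times 2$ matrices over $R$, on which the Jacobson--Rickart result applies.

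With every $d_i$ now known to be a derivation, the second and final step is to apply the sufficiency direction of Lemma~\ref{lem-d-der-iff-d_i-der}, which turns the statement ``$d_i$ is a derivation for all $i\in I$'' into ``$d$ is a derivation of $RFM_I(R)$.'' This finishes the proof in two lines.

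Because both halves are already available, I do not expect a genuine obstacle at the level of this theorem itself; the real technical weight sits inside the two lemmas. If I had to single out the delicate part, it is the sufficiency of Lemma~\ref{lem-d-der-iff-d_i-der}: there one builds the auxiliary derivation $d'$, checks that $d'(\alpha)$ is again row-finite (Lemma~\ref{lem-d'(alpha)-is-row-finite}), and then shows that the Jordan derivation $d-d'$ annihilates every entry by restricting it to the $2\times 2$ subrings $A_{ij}$. A last structural remark: the whole argument rests on the matrix units $e_{ij}$, and hence on the unit of $R$ assumed throughout \cref{sec-RFM}; it is precisely this that lets the infinite-index problem be reduced to the classical finite matrix theorem.
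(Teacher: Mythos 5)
Your proposal is correct and coincides with the paper's own proof, which states that the theorem is ``a direct consequence of Lemmas~\ref{lem-d-der-iff-d_i-der} and~\ref{lem-d_i-is-der}'': you apply Lemma~\ref{lem-d_i-is-der} to get that each $d_i$ is a derivation and then the sufficiency direction of Lemma~\ref{lem-d-der-iff-d_i-der} to conclude. Your added remarks on where the weight of the argument lies (the corner rings $A_{ij}$, row-finiteness of $d'(\alpha)$, and the standing unitality assumption on $R$ in \cref{sec-RFM}) are accurate as well.
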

\begin{proof}
 A direct consequence of Lemmas~\ref{lem-d-der-iff-d_i-der} and~\ref{lem-d_i-is-der}.
\end{proof}

\begin{rem}\label{rem-jder-of-RFM_I-for-|I|=1}
 If $|I|=1$, then $RFM_I(R)\cong R$, so the question whether a Jordan derivation of $RFM_I(R)$ is a derivation reduces to the same question for the ring $R$.
\end{rem}

\section{Jordan derivations of \texorpdfstring{$FI(P,R)$}{FI(P,R)}}\label{sec-FI(P,R)}

\begin{lem}\label{lem-RFM_I-times-J-faithful}
 Let $R$ be a unital ring and $I$, $J$ (nonempty) sets. Then $RFM_{I\times J}(R)$ is faithful as a left $RFM_I(R)$-module and as a right $RFM_J(R)$-module.
\end{lem}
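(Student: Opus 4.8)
The plan is to establish both faithfulness statements directly, using the single-entry matrices of $RFM_{I\times J}(R)$ as ``probes'' and exploiting the hypothesis that $R$ is unital. Recall that the left action of $RFM_I(R)$ and the right action of $RFM_J(R)$ on $RFM_{I\times J}(R)$ are given by matrix multiplication, which is well defined and lands in the bimodule by the row-finiteness condition. For $k\in I$ and $l\in J$ I would write $\varepsilon_{kl}$ for the matrix in $RFM_{I\times J}(R)$ whose only nonzero entry is $1\in R$, placed at the intersection of the $k$-th row and $l$-th column; being single-entry, $\varepsilon_{kl}$ is trivially row-finite, so it genuinely belongs to the bimodule.

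For faithfulness on the left I would take a nonzero $a\in RFM_I(R)$ and fix indices $i,k\in I$ with $a_{ik}\ne 0$. Choosing any $l\in J$ (possible since $J\ne\emptyset$), a one-line computation gives $(a\varepsilon_{kl})_{il}=\sum_{t\in I}a_{it}(\varepsilon_{kl})_{tl}=a_{ik}\cdot 1=a_{ik}\ne 0$, so $a\varepsilon_{kl}\ne 0$. Thus no nonzero element of $RFM_I(R)$ annihilates all of $RFM_{I\times J}(R)$, which is exactly what left faithfulness means.

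The right case is entirely symmetric: given a nonzero $b\in RFM_J(R)$, I would pick $k,l\in J$ with $b_{kl}\ne 0$ and, fixing any $i\in I$ (possible since $I\ne\emptyset$), compute $(\varepsilon_{ik}b)_{il}=\sum_{t\in J}(\varepsilon_{ik})_{it}b_{tl}=b_{kl}\ne 0$, whence $\varepsilon_{ik}b\ne 0$ and the right action is faithful.

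There is no serious obstacle in this argument; the only points deserving care are the two places where the hypotheses are used. Unitality of $R$ is essential, since it is precisely the factor $a_{ik}\cdot 1=a_{ik}$ (and $b_{kl}$ in the right case) that guarantees the probe detects the chosen nonzero entry; without a unit faithfulness can fail. The nonemptiness of $I$ and $J$ is what allows the probe $\varepsilon_{kl}$ to exist in the bimodule in the first place. Localizing to a single nonzero entry of the (possibly infinite) matrix $a$ or $b$ is what keeps the computation finite and trivial.
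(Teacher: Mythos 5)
Your proof is correct and is essentially the same as the paper's: both use the single-entry matrices $e_{i'j}$ (your $\varepsilon_{kl}$) as probes and read off one entry of the product to detect a nonzero entry of the annihilating matrix, the only cosmetic difference being that the paper phrases it contrapositively (assume $\alpha\beta=0$ for all $\beta$, conclude $\alpha=0$) and leaves the right-module case as ``similar,'' which you spell out.
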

\begin{proof}
 We prove the assertion for the left module, the proof for the right module is similar.

 Let $\alpha\in RFM_I(R)$. Suppose that $\alpha\beta=0$ for any $\beta\in RFM_{I\times J}(R)$. Fix $i,i'\in I$. Choose an arbitrary $j\in J$ and consider the matrix $e_{i'j}\in RFM_{I\times J}(R)$. By our assumption $\alpha e_{i'j}=0$. In particular, $0=(\alpha e_{i'j})_{ij}=\alpha_{ii'}$. Since $i$ and $i'$ were arbitrary elements of $I$, the latter means that $\alpha=0$.
\end{proof}


\begin{thrm}\label{thrm-crit-jder-FI(P,R)-is-der}
Let $(P,\preceq)$ be a preordered set and $R$ a unital ring. If $P$ has no isolated elements, then every Jordan derivation of $FI(P,R)$ is a derivation. Otherwise every Jordan derivation of $FI(P,R)$ is a derivation if and only if every Jordan derivation of $R$ is a derivation.
\end{thrm}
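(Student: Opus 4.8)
The plan is to derive the statement as a direct application of \cref{thrm-crit-jder-FI(C)-is-der} to the pocategory $\cC=\cC(P,R)$, recalling that $FI(P,R)=FI(\cC)$ by definition and that $\ob\cC=\overline P$ carries the induced partial order $\le$. The first thing I would check is that $\cC$ satisfies the faithfulness hypothesis of that theorem: whenever $\bar x<\bar y$ in $\overline P$, the bimodule $\mor{\bar x}{\bar y}=RFM_{\bar x\times\bar y}(R)$ must be faithful on the left and on the right. Since the classes $\bar x$ and $\bar y$ are nonempty and $R$ is unital, this is exactly \cref{lem-RFM_I-times-J-faithful}. As a non-isolated object is, by definition, comparable to some other object, this furnishes the faithful bimodule demanded by \cref{thrm-crit-jder-FI(C)-is-der} for every non-isolated object, irrespective of whether $P$ has isolated elements. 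Hence that theorem reduces both halves of the statement to deciding, for each isolated object $\bar x$, whether every Jordan derivation of $\mor{\bar x}{\bar x}=RFM_{\bar x}(R)$ is a derivation.

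Next I would set up the dictionary between isolation in $P$ and isolation in $\overline P$. The key observation is that an element $x\in P$ is isolated in $P$ if and only if $\bar x$ is an isolated object of $\cC$ with $|\bar x|=1$. Indeed, if $x$ is isolated it is equivalent to no other element, forcing $\bar x=\{x\}$, and it is comparable with no other element, so $\bar x$ is incomparable with every other class; conversely, if $\bar x=\{x\}$ is isolated, then no $y\neq x$ is $\sim$-equivalent to $x$ and none is comparable to $x$ across distinct classes, so $x$ is incomparable with every other element of $P$. With this in hand the isolated objects split into two types. For an isolated object with $|\bar x|>1$, \cref{thrm-jder-of-RFM_I-for-|I|>1} guarantees that every Jordan derivation of $RFM_{\bar x}(R)$ is automatically a derivation, so such objects impose no condition. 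For an isolated object with $|\bar x|=1$ one has $\mor{\bar x}{\bar x}\cong R$, so the condition on $\mor{\bar x}{\bar x}$ is precisely the condition that every Jordan derivation of $R$ be a derivation.

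Assembling these observations yields both assertions. If $P$ has no isolated elements, then by the dictionary every isolated object $\bar x$ satisfies $|\bar x|>1$, so the reduced condition from \cref{thrm-crit-jder-FI(C)-is-der} is satisfied for free and every Jordan derivation of $FI(P,R)$ is a derivation. If instead $P$ has an isolated element, then there is at least one isolated object with $|\bar x|=1$; since such objects impose exactly the condition on $R$ while the remaining isolated objects impose none, \cref{thrm-crit-jder-FI(C)-is-der} makes the assertion that every Jordan derivation of $FI(P,R)$ is a derivation equivalent to the assertion that every Jordan derivation of $R$ is a derivation.

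The only delicate point I anticipate is the bookkeeping in this dictionary: one must keep the notion of an isolated element of the preordered set $P$ separate from that of an isolated object of the poset $\overline P$, and notice that a non-singleton isolated class contributes no constraint thanks to \cref{thrm-jder-of-RFM_I-for-|I|>1}. Once that correspondence is made precise, the proof is a straightforward combination of \cref{thrm-crit-jder-FI(C)-is-der,thrm-jder-of-RFM_I-for-|I|>1,lem-RFM_I-times-J-faithful}.
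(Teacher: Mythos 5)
Your proposal is correct and follows essentially the same route as the paper's own proof: reduce via \cref{thrm-crit-jder-FI(C)-is-der} and \cref{lem-RFM_I-times-J-faithful} to the isolated objects of $\overline P$, dispatch the classes with $|\bar x|>1$ by \cref{thrm-jder-of-RFM_I-for-|I|>1}, identify singleton classes with $R$, and use the correspondence between isolated elements of $P$ and isolated singleton classes of $\overline P$. Your write-up merely makes that last dictionary more explicit than the paper does.
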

\begin{proof}
 By Theorem~\ref{thrm-crit-jder-FI(C)-is-der} and Lemma~\ref{lem-RFM_I-times-J-faithful} every Jordan derivation of $FI(P,R)$ is a derivation if and only if for any isolated $\bar x$ of $\overline P$ every Jordan derivation of $RFM_{\bar x}(R)$ is a derivation.

 Let $\bar x$ be an isolated element of $\overline P$. If $|\bar x|>1$, then each Jordan derivation of $RFM_{\bar x}(R)$ is a derivation thanks to Theorem~\ref{thrm-jder-of-RFM_I-for-|I|>1}. If $|\bar x|=1$, then $RFM_{\bar x}(R)\cong R$, so each Jordan derivation of $RFM_{\bar x}(R)$ is a derivation if and only if each Jordan derivation of $R$ is a derivation.

 It remains to note that the isolated elements of $P$ are exactly those $x\in P$, for which $\bar x$ is isolated in $\overline P$ and $|\bar x|=1$.
\end{proof}

The next Remark generalizes Theorem~3.3 from~\cite{XiaoJDerIAlg}.
\begin{rem}\label{rem-R-linear}
 Let $(P,\preceq)$ be a preordered set and $R$ a commutative unital ring. Then every $R$-linear Jordan derivation of $FI(P,R)$ is a derivation.
\end{rem}
\noindent Indeed, if $d$ is an $R$-linear Jordan derivation of $FI(P,R)$, then $d_{\bar x}$ is an $R$-linear Jordan derivation of $RFM_{\bar x}(R)$ for all $\bar x\in\ob\cC$. If $|\bar x|>1$, then $d_{\bar x}$ is a derivation by \cref{thrm-jder-of-RFM_I-for-|I|>1}. If $|\bar x|=1$, then $d_{\bar x}$ may be identified with an $R$-linear Jordan derivation of $R$, which is trivial, as it maps $1$ to $0$ by~\eqref{eq-d(r^2)}. Thus, $d$ is a derivation 	by \cref{prop-d-der-iff-d_x-der}.

\section*{Acknowledgements}
The author is grateful to FAPESP of Brazil for the financial support (process number: 2012/01554--7) and to professor Zhankui Xiao for the valuable comments that helped to improve the article.

\bibliography{bibl}{}

\begin{thebibliography}{10}

\bibitem{Anderson-Winders09}
{\sc Anderson, D., and Winders, M.}
\newblock {Idealization of a module}.
\newblock {\em J. Commut. Algebra 1}, 1 (2009), 3--56.

\bibitem{Benkovic05}
{\sc Benkovi\v{c}, D.}
\newblock {Jordan derivations and antiderivations on triangular matrices}.
\newblock {\em Linear Algebra Appl. 397\/} (2005), 235--244.

\bibitem{Benkovic-Sirovnik12}
{\sc Benkovi\v{c}, D., and \v{S}irovnik, N.}
\newblock {Jordan derivations of unital algebras with idempotents}.
\newblock {\em Linear Algebra Appl. 437\/} (2012), 2271--2284.

\bibitem{Ghosseiri07}
{\sc Ghosseiri, N.~M.}
\newblock {Jordan derivations of some classes of matrix rings}.
\newblock {\em Taiwanese J. Math. 11}, 1 (2007), 51--62.

\bibitem{Herstein57}
{\sc Herstein, I.~N.}
\newblock {Jordan derivations of prime rings}.
\newblock {\em Proc. Amer. Math. Soc. 8\/} (1957), 1104--1110.

\bibitem{Jacobson-Rickart50}
{\sc Jacobson, N., and Rickart, C.~E.}
\newblock {Jordan homomorphisms of rings}.
\newblock {\em Trans. Amer. Math. Soc. 69\/} (1950), 479--502.

\bibitem{Khripchenko10-quasi}
{\sc Khripchenko, N.~S.}
\newblock {Finitary incidence algebras of quasiorders}.
\newblock {\em Matematychni Studii 34}, 1 (2010), 30--37.

\bibitem{Khripchenko-Novikov09}
{\sc Khripchenko, N.~S., and Novikov, B.~V.}
\newblock {Finitary incidence algebras}.
\newblock {\em Comm. Algebra 37}, 5 (2009), 1670--1676.

\bibitem{XiaoJDerIAlg}
{\sc Xiao, Z.}
\newblock {Jordan derivations of incidence algebras}.
\newblock {\em Rocky Mountain J. Math. 45}, 4 (2015), 1357--1368.

\bibitem{Zhang-Yu06}
{\sc Zhang, J.-H., and Yu, W.-Y.}
\newblock {Jordan derivations of triangular algebras}.
\newblock {\em Linear Algebra Appl. 419\/} (2006), 251--255.

\end{thebibliography}
\bibliographystyle{acm}

\end{document}